\theoremstyle{definition}
\newtheorem{thm}{Theorem}
\newtheorem{prop}[thm]{Proposition}
\newtheorem{lemma}[thm]{Lemma}
\newtheorem{cor}[thm]{Corollary}
\newtheorem{defn}[thm]{Definition}
\newtheorem{rmk}[thm]{Remark}
\newcommand{\QQ}{\mathbb{Q}}
\newcommand{\NN}{\mathbb{N}}
\newcommand{\ZZ}{\mathbb{Z}}
\newcommand{\RR}{\mathbb{R}}
\newcommand{\diadic}{\mathbb{Z}[\frac{1}{2}]}
\newcommand{\form}{\widetilde{\mathbb{G}}}
\newcommand{\games}{{\mathbb{G}}}
\newcommand{\trer}{\mathrel{\mathsurround=0pt \mbox{\raisebox{1.2pt}{\tiny \textbf{\textbar }}$\rhd$}}}
\newcommand{\trel}{\mathrel{\mathsurround=0pt \mbox{$\lhd$\raisebox{1.2pt}{\tiny \textbf{\textbar }}}}}
\title{\Large{Foundations of Temperature Theory}}
\author{Katsuyuki Bando\footnote{Katsuyuki Bando, the University of Tokyo, Graduate School of Mathematical Sciences, Japan, \texttt{kbando@ms.u-tokyo.ac.jp}}, Eitetsu Ken\footnote{Eitetsu Ken, the University of Tokyo, Graduate School of Mathematical Sciences, Japan, \texttt{eitetsu@ms.u-tokyo.ac.jp}} \& Kota Morikawa\footnote{Kota Morikawa, the University of Tokyo, Graduate School of Mathematical Sciences, Japan, \texttt{ut6489@g.ecc.u-tokyo.ac.jp}}}
\begin{document}
\maketitle
 \abstract{Temperature of combinatorial games have been long studied since when Conway established the modern combinatorial game theory, and there are several variations of the concepts. In this article, we focus on one of the classical version of temperature, and give precise proofs to the fundamental claims on temperature, namely, the existence, order-preservation, and homomorphism. Besides, a general relationship between the value and the thermograph of a game is investigated.}
 \section{Introduction}
 \quad Combinatorial game theory pursues two-player games with complete information: to evaluate every one of positions of a given game, and to find which player wins the game are ones of its ultimate goals. In order to tackle these problems, there has been developed a variety of techniques and theories, and temperature theory is one of the most important among them.\\ 
  \quad Temperature of games has been studied since the beginning of the combinatorial game theory. Several kinds of the concept can be found in the classical books such as \cite{conway} and \cite{berlekamp}. The theory is utilized to calculate the mean value of a given game, and it is also a basis of the theory of thermal dissociation, analysis of concrete games (such as AMAZONS), and many other topics of the area (for details, the reader is encouraged to see \cite{siegel}).\\
  \quad In this article, we focus on one of the standard versions of temperature (defined in Definition \ref{defoftemp} below), and give thorough and yet easy-to-understand proofs of its foundations\footnote{Especially, we will apply the Intermediate Value Theorem very carefully, and make it clear why $t(G)$ should be grater than $-1$ for a non-integer $G$ (See Theorem \ref{existence}).}. Specifically, we show that temperature does exist for every game, that cooling is order-preserving, and that cooling is homomorphism. As a byproduct, a general relationship between the value and the thermograph of a game (See Lemma \ref{comparison} and Theorem \ref{integer}) is obtained. 
  
 \section{Acknowledgement}
 The authors are really grateful to the Graduate School of Mathematical Sciences, the University of Tokyo, for expanding our possibilities and providing rooms for our seminar.  
 
 \section{Notation and convention}
 We assume the results from chapter I to chapter II \S 3 of \cite{siegel} (whose arguments do not rely on temperature theory). Also, we follow the notation of \cite{siegel} unless we state otherwise. In particular, when we say ``two games are equal," it usually means that ``the game values of them are equal."   
 
 \section{Existence of Temperature}
 Recall that there is a group isomorphism between $\diadic$ and the subgroup consisted of numbers in $\games$ (the group of game values of normal-play games). With this, we regard $\diadic < \games$.
 \begin{defn}\label{defoftemp}
  For each $G \in \form$ (where $\form$ is the set of the all formal normal-play games) and $t \in \diadic$, define $\lambda_t(G), \rho_t(G) \in \diadic$, $G_t \in \form$, $t(G) \in \diadic \cup \{- \infty \}$ by induction on $\tilde{b}(G)$ (i.e. the formal birthday of $G$) as follows (the well-definedness will be verified in Theorem \ref{existence}):
  \begin{enumerate}
   \item If $G$ is equal to an integer $n$, 
   \begin{align*}
    \lambda_t(G)&:= \rho_t(G):= n \in \diadic \subset \games,\\
    G_t&:= n \in \form
   \end{align*}
 ($n$ is in canonical form, say\footnote{It can be easily seen that the exact form does not affect to the theory. It is enough that the value is determined modulo equality.}). Set $t(G):=-\infty$.
   \item Otherwise, let
    \begin{align*}
     \widetilde{\lambda}_t(G)&:= \max_{G^L} \{ \rho_t(G^L)-t\} \in \diadic\\
     \widetilde{\rho}_t(G)&:= \min_{G^R} \{ \lambda_t(G^R)+t\} \in \diadic\\
     \widetilde{G}_t&:= \{ (G^L)_t-t \mid (G^R)_t+t\} \in \form
    \end{align*}
    Then, there exists
    \begin{align*}
    t(G) := \min \left\{t \in \diadic \mid \widetilde{\lambda}_t(G)=\widetilde{\rho}_t(G) \right\}.
    \end{align*}
    Using this, set
    \begin{align*}
     \lambda_t(G)&:= \begin{cases}
      \widetilde{\lambda}_t(G) &( t \leq t(G))\\
      x &(otherwise)
     \end{cases}\\
      \rho_t(G)&:= \begin{cases}
      \widetilde{\rho}_t(G) &( t \leq t(G))\\
      x &(otherwise)
     \end{cases}\\
     G_t&:= \begin{cases}
      \widetilde{G}_t &(t \leq t(G))\\
      x_{can} &(otherwise)
     \end{cases}
    \end{align*}
    where $x:=  \lambda_{t(G)}(G) = \rho_{t(G)}(G)$, and $x_{can} \in \form$ is the canonical form of $x$.
  \end{enumerate}
 \end{defn}
   The first thing we shall show is that the above ``definition" actually works. Towards this end, we will define the following notion:
   
 \begin{defn}
  A diadic trajectory $f \colon \diadic \rightarrow \diadic$ is a function having the following form:
  \begin{align*}
   f(x)= \begin{cases}
    a_0x+b_0 &(x \leq j_0)\\
    a_ix+b_i &(j_{i-1} \leq x \leq j_i \quad \mbox{for each $i=1, \ldots n$})\\
    a_{n+1}x+b_{n+1} &(j_{n} \leq x)
   \end{cases}
  \end{align*}
  where $0 \leq n$, $j_0, \ldots, j_n$, each $b_i$ is in $\diadic$ and $a_i$ is in $\{-1,0,1\}$. Also, 
  \begin{align*}
  a_ij_i+b_i=a_{i+1}j_{i+1}+b_{i+1}.
  \end{align*}
  \quad In words, a diadic trajectory is a function whose graph is a connected union of finite segments with slope $\pm 1$ or $0$, ending with two half lines, and whose joint points are all in $\diadic$. \\
  \quad In addition, when a diadic trajectory $f$ given above satisfies $a_{n+1}=0$, the last half line $f(x)=a_{n+1}x+b_{n+1}=b_{n+1}$ is called its \it{mast},
   \rm and $b_{n+1}$ is called its 
   \it{mast value}. 
 \end{defn}
 
 \begin{rmk}\label{intermediate}
  Notice that if $f,g$ are diadic trajectories, then
  $M(x):= \max \{ f(x),g(x)\}$ and $m(x):= \min \{ f(x), g(x)\}$ are also diadic trajectories.\\
  In addition, if $f$ and $g$ meet at a point $(x_0,y_0) \in \QQ ^2$, then $x_0 \in \diadic$ (in particular, if $f(a_0) < 0 < f(a_1)$ ($a_0, a_1 \in \diadic$), then there exists $x \in \diadic$ such that $f(x)=0$. This is sometimes called Intermediate Value Theorem).
 \end{rmk}
 
 Now, our first claim can be formulated:
 
 \begin{thm}\label{existence}
 For each $G \in \form$, the following hold:
  \begin{enumerate}
   \item\label{well-definedness} $\lambda_t(G), \rho_t(G),G_t$ and $t(G)$ are well-defined, and $t(G)>-1$ if $G$ is not equal to an integer.
   \item\label{decreasing} $\lambda_t(G)$ is (as a function of $t$) a diadic trajectory consisting of segments with slope $0$ or $-1$.
   \item\label{increasing} $\rho_t(G)$ is (as a function of $t$) a diadic trajectory consisting of segments with slope $0$ or $1$.
   \item\label{mast value} Both $\lambda_t(G), \rho_t(G)$ have masts, and the mast values are equal.   
   \item\label{stops} $\forall t \in \diadic.\ L(G_t)=\lambda_t(G)$ and $R(G_t)=\rho_t(G)$. 
   \item\label{classify} Suppose $G$ is not equal to an integer. Then, for each $t>-1$,
       \begin{enumerate}
        \item If $t<t(G)$, $G_t$ is not numberish.
        \item If $t=t(G)$, $G_t$ is numberish.
        \item If $t>t(G)$, $G_t$ is not equal to a number.
       \end{enumerate}
  \end{enumerate}
 \end{thm}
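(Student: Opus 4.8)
The plan is to establish all six clauses simultaneously by induction on the formal birthday $\tilde{b}(G)$, feeding the inductive hypothesis for the options $G^L, G^R$ (which have strictly smaller birthday) into the recursion of Definition~\ref{defoftemp}. Along with the six clauses I would also carry, as part of the inductive statement, the auxiliary identities $\lambda_{-1}(G)=\lceil G\rceil$ and $\rho_{-1}(G)=\lfloor G\rfloor$ (the least integer $\ge G$, the greatest integer $\le G$), since these are exactly what yield the bound $t(G)>-1$. When $G$ is equal to an integer $n$ the base case is immediate: $\lambda_t,\rho_t$ are the constant $n$, a diadic trajectory with mast value $n$; $G_t=n$, so $L(G_t)=R(G_t)=n$; clause~\ref{classify} is vacuous; and $\lceil n\rceil=\lfloor n\rfloor=n$.

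Now suppose $G$ is not equal to an integer. By the inductive hypothesis each $\rho_t(G^L)$ and $\lambda_t(G^R)$ is a diadic trajectory with slopes in $\{0,1\}$ and in $\{-1,0\}$ respectively, so (translating by $\mp t$, then using Remark~\ref{intermediate}, under which a finite max or min of diadic trajectories is again one and creates no new slopes, the extremum agreeing near each point with one of the constituents) $\widetilde{\lambda}_t(G)=\max_{G^L}\{\rho_t(G^L)-t\}$ is a diadic trajectory with slopes in $\{-1,0\}$ and $\widetilde{\rho}_t(G)=\min_{G^R}\{\lambda_t(G^R)+t\}$ one with slopes in $\{0,1\}$. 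Hence $D(t):=\widetilde{\lambda}_t(G)-\widetilde{\rho}_t(G)$ is continuous and piecewise linear with slopes in $\{-2,-1,0\}$, i.e. non-increasing; and using the masts of the options' trajectories (clause~\ref{mast value} of the hypothesis), for large $t$ the functions $\widetilde{\lambda}_t(G),\widetilde{\rho}_t(G)$ are affine with slopes $-1$ and $+1$, so $D(t)\to-\infty$.

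The crucial step, which I expect to be the main obstacle, is $D(-1)>0$. Using the inductive hypothesis on the options — including the auxiliary identities — one has $\widetilde{\lambda}_{-1}(G)=\max_{G^L}\rho_{-1}(G^L)+1=\max_{G^L}\lfloor G^L\rfloor+1$ and, symmetrically, $\widetilde{\rho}_{-1}(G)=\min_{G^R}\lceil G^R\rceil-1$ (an integer option $H$ contributes $\lfloor H\rfloor=H=\lceil H\rceil$ directly; a non-integer option $H$ already has $t(H)>-1$ by induction, so $\rho_{-1}(H)=\widetilde{\rho}_{-1}(H)=\lfloor H\rfloor$ and $\lambda_{-1}(H)=\widetilde{\lambda}_{-1}(H)=\lceil H\rceil$). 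A short computation with the recursive structure of the integer part of a game — valid precisely because $G$ is \emph{not} an integer, since on an integer in non-canonical form it fails, which is exactly why the ``equal to an integer'' clause of Definition~\ref{defoftemp} is indispensable — identifies $\max_{G^L}\lfloor G^L\rfloor+1$ as the least integer $\ge G$ and $\min_{G^R}\lceil G^R\rceil-1$ as the greatest integer $\le G$. Thus $D(-1)=\lceil G\rceil-\lfloor G\rfloor$, and since $\lfloor G\rfloor\le G\le\lceil G\rceil$ these two integers must differ when $G$ is not an integer, so $D(-1)\ge 1>0$. Now $D$ is non-increasing, continuous, positive at $-1$, and tends to $-\infty$; the Intermediate Value Theorem of Remark~\ref{intermediate} produces a diadic zero of $D$, so $\{t\in\diadic\mid\widetilde{\lambda}_t(G)=\widetilde{\rho}_t(G)\}$ is a non-empty interval whose left endpoint lies in $\diadic$ (again by Remark~\ref{intermediate}) and is $>-1$. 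That endpoint is $t(G)$, giving clause~\ref{well-definedness}, and at the same time $\lambda_{-1}(G)=\widetilde{\lambda}_{-1}(G)=\lceil G\rceil$ and $\rho_{-1}(G)=\lfloor G\rfloor$, closing the auxiliary.

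With $t(G)$ in hand the remaining clauses are bookkeeping. For \ref{decreasing}--\ref{mast value}: on $t\le t(G)$ one has $\lambda_t(G)=\widetilde{\lambda}_t(G)$, $\rho_t(G)=\widetilde{\rho}_t(G)$, meeting the constant value $x$ at $t=t(G)$, and on $t>t(G)$ both equal $x$; so each is a diadic trajectory with the stated slopes, each has a mast, and the common mast value is $x$. For \ref{stops}: when $t\le t(G)$, $G_t=\widetilde{G}_t=\{(G^L)_t-t\mid(G^R)_t+t\}$, so if $G_t$ is not a number then $L(G_t)=\max_{G^L}R\big((G^L)_t-t\big)=\widetilde{\lambda}_t(G)$ by clause~\ref{stops} for the options, and symmetrically for $R$; if $G_t$ is a number it is numberish, which (via $R(H)\le L(H)$ and the stop formulas) forces $\widetilde{\lambda}_t(G)=\widetilde{\rho}_t(G)$, hence $t=t(G)$ and $G_t=x$ by the simplicity rule, again giving $L(G_t)=R(G_t)=x$; for $t>t(G)$, $G_t=x_{can}$ makes it trivial. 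For \ref{classify}, with $G$ non-integer and $t>-1$: on $t\le t(G)$ one has $L(G_t)-R(G_t)=D(t)$, which is $>0$ exactly when $t<t(G)$ and $=0$ when $t=t(G)$, giving (a) and (b); and for $t>t(G)$ clause (c) is read off from $G_t:=x_{can}$, the relevant point being that the un-truncated recursion $\widetilde{G}_t$ no longer represents the value $x$ once $D(t)<0$ (there $L(\widetilde{G}_t)=\widetilde{\lambda}_t(G)\ne\widetilde{\rho}_t(G)=R(\widetilde{G}_t)$), which is what forces the substitution in Definition~\ref{defoftemp}. Apart from the crux, the only external inputs are the standard facts $R(H)\le L(H)$, the simplicity rule, and the elementary recursion for the integer part of a game, so essentially all the weight of the proof sits in the computation of $\widetilde{\lambda}_{-1}(G),\widetilde{\rho}_{-1}(G)$.
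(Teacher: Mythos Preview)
Your approach is essentially the same as the paper's: a simultaneous induction on $\tilde b(G)$, with the only substantive step being the inequality $\widetilde\lambda_{-1}(G)>\widetilde\rho_{-1}(G)$ for non-integer $G$, after which the Intermediate Value Theorem and routine bookkeeping finish clauses (\ref{well-definedness})--(\ref{classify}).

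The one point worth flagging is packaging rather than substance. The paper introduces auxiliary integer-valued recursions $\overline\lambda(G),\overline\rho(G)$ and proves a separate Lemma~\ref{integerstop} relating them to the order against integers; it then identifies $\widetilde\lambda_{-1}(G)=\overline\lambda(G)$, $\widetilde\rho_{-1}(G)=\overline\rho(G)$ and invokes Simplicity to contradict $\overline\lambda(G)\le\overline\rho(G)$. Your version carries the identities $\lambda_{-1}(G)=\lceil G\rceil$, $\rho_{-1}(G)=\lfloor G\rfloor$ through the induction and then appeals to ``a short computation with the recursive structure of the integer part'' to get $\max_{G^L}\lfloor G^L\rfloor+1=\lceil G\rceil$ and its dual. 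That ``short computation'' is precisely the content of Lemma~\ref{integerstop}: for non-integer $G$ one has $n\ge G\iff G^L\trel n$ for every $G^L$ (Integer Avoidance handles the $n<0$ side), and then $G^L\trel n\iff n>\lfloor G^L\rfloor$, whence $\lceil G\rceil=\max_{G^L}\lfloor G^L\rfloor+1$. So your route is correct, but you should spell out this step rather than label it ``short''; it is the crux, and without it the reader cannot see why the special handling of integers in Definition~\ref{defoftemp} is exactly what makes $t(G)>-1$ work. Otherwise the two proofs coincide.
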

 
 To prove this theorem, the following notion and lemma are crucial:
 
 \begin{defn}
  For $G \in \form$, define $\overline{\lambda}(G)$ and $\overline{\rho}(G)$ by induction on $\widetilde{b}(G)$ as follows:
  \begin{enumerate}
   \item If $G$ is equal to an integer $n \in \ZZ$ (including the case $\widetilde{b}(G)=0$), set 
   \begin{align*}
   \overline{\lambda}(G):=\overline{\rho}(G):=n
   \end{align*}
   
   \item Otherwise, set
   \begin{align*}
    \overline{\lambda}(G):= \max_{G^L} \{ \overline{\rho}(G^L)+1\}\\
    \overline{\rho}(G):= \min_{G^R} \{ \overline{\lambda}(G^R)-1\}
   \end{align*}
  \end{enumerate}
  Notice that $\overline{\lambda}(G),\overline{\rho}(G) \in \ZZ$.
 \end{defn}
 
 \begin{lemma}\label{integerstop}
  Let $G \in \form$. Then, the following holds:
  \begin{enumerate}
   \item Suppose $n \in \ZZ$ and $G$ is not equal to an integer, and $\overline{\lambda}(G) \leq n$. Then, each $G^L$ satisfies $G^L \trel n$.
   \item Suppose $n \in \ZZ$ and $G$ is not equal to an integer, and $n \leq \overline{\rho}(G)$. Then, each $G^R$ satisfies $n \trel G^R$.
   \item Suppose $n \in \ZZ$ and $G$ is not equal to an integer, and $n<\overline{\lambda}(G) $. Then, there exists a $G^L$ such that $n \leq G^L$.
   \item Suppose $n \in \ZZ$ and $G$ is not equal to an integer, and $\overline{\rho}(G) < n$. Then, there exists a $G^R$ such that $G^R \leq n$. 
  \end{enumerate} 
 \end{lemma}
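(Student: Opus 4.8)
The plan is to prove all four statements simultaneously by induction on the formal birthday $\tilde{b}(G)$. Two preliminary reductions organize the argument. First, from the defining recursion and $(-G)^L = -(G^R)$ one gets by an easy sub-induction that $\overline{\lambda}(-G) = -\overline{\rho}(G)$ and $\overline{\rho}(-G) = -\overline{\lambda}(G)$; since moreover $A \trel B$ is equivalent to $-B \trel -A$, statements (2) and (4) follow from (1) and (3) applied to $-G$, so it suffices to prove (1) and (3), freely using all of (1)--(4) at strictly smaller formal birthdays. Second, since $G$ is not equal to an integer, the recursive clause of the definition of $\overline{\lambda},\overline{\rho}$ is available \emph{for $G$ itself}; its options, however, may still be equal to integers, in which case one must read $\overline{\lambda},\overline{\rho}$ of them through the base clause.

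For (1): assume $\overline{\lambda}(G)\le n$. From $\overline{\lambda}(G)=\max_{G^L}\{\overline{\rho}(G^L)+1\}$ we get $\overline{\rho}(G^L)\le n-1<n$ for every Left option. Fix one. If $G^L$ is equal to an integer $k$, then $k=\overline{\rho}(G^L)\le n-1$, so $G^L=k<n$ and hence $G^L\trel n$. If $G^L$ is not equal to an integer, apply part (4) of the inductive hypothesis to $G^L$ at $n$ (legitimate, since $\overline{\rho}(G^L)<n$): there is a Right option $G^{LR}$ with $G^{LR}\le n$. A game with a Right option $\le n$ cannot be $\ge n$ (immediately: $G^L-n$ then has the Right option $G^{LR}-n\le 0$, so $G^L-n\not\ge 0$), hence $G^L\trel n$.

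For (3): assume $n<\overline{\lambda}(G)=\max_{G^L}\{\overline{\rho}(G^L)+1\}$, and fix a Left option with $\overline{\rho}(G^L)\ge n$ (one exists since $\overline{\rho}(G^L)\in\ZZ$). If $G^L$ is equal to an integer $k$, then $k=\overline{\rho}(G^L)\ge n$ and we are done. Otherwise $G^L$ is not equal to an integer, so part (2) of the inductive hypothesis applies to $G^L$ at $n$: since $n\le\overline{\rho}(G^L)$, every Right option $G^{LR}$ satisfies $n\trel G^{LR}$, i.e.\ no $G^{LR}$ is $\le n$. It remains to upgrade this to $n\le G^L$. If $G^L$ is equal to a (necessarily non-integer) number $x$, apply part (2) of the inductive hypothesis to $G^L$ once more, now at the integer $\lceil x\rceil$: if $\lceil x\rceil\le\overline{\rho}(G^L)$, then no $G^{LR}$ is $\le\lceil x\rceil$, and since $G^L=x\not\ge\lceil x\rceil$ the test game $G^L-\lceil x\rceil$ would have a Right option $\le 0$, forcing either some $G^{LR}\le\lceil x\rceil$ or $x=G^L\le\lfloor x\rfloor$ — both impossible; hence $\overline{\rho}(G^L)\le\lfloor x\rfloor$ and $n\le\lfloor x\rfloor<x=G^L$. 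If $G^L$ is not equal to a number, then since $n$ is a number, $G^L\ge n$ holds if and only if no Right option of $G^L$ is $\le n$ (this is exactly where the hypothesis that $G^L$ is not a number enters: it lets one ignore the Left options of $n$), and the latter we have just established.

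The main obstacle is precisely this last upgrade in (3) (and dually (4)): passing from information about the options of a Left (resp.\ Right) option of $G$ to a comparison of that option with the integer $n$ itself. Such a passage is false for arbitrary games — $0$ has no Right option yet $0\not\ge 1$ — so it must be split according to whether the option is a non-number (handled by the termwise behaviour of adding a number, i.e.\ the Translation Theorem for numbers) or a non-integer number (handled by reinvoking the inductive hypothesis at the ceiling and using $\lfloor x\rfloor<x<\lceil x\rceil$), with the integer case dispatched by the base clause. By contrast (1) and (2) need only the recursion plus the elementary remark about a Right option $\le n$. The remaining work is bookkeeping: keeping the strict versus non-strict inequalities aligned (the hypothesis ``$\overline{\lambda}(G)\le n$'' in (1) against ``$n<\overline{\lambda}(G)$'' in (3)) as one descends to options, and verifying that the $G\mapsto -G$ symmetry between (1)--(2) and (3)--(4) is respected at every step.
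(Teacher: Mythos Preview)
Your proof is correct and follows the same simultaneous induction on $\widetilde{b}(G)$ as the paper's. The only substantive difference lies in the ``upgrade'' step of part~(3): where the paper invokes the Integer Avoidance Theorem in one stroke to pass from ``every $G^{LR}$ satisfies $n \trel G^{LR}$'' to ``$n \leq G^L$'' (using only that $G^L$ is not equal to an integer), you instead split into the sub-cases where $G^L$ is not a number (handled by what is really Number Avoidance rather than Number Translation, a minor misattribution) and where $G^L$ is a non-integer number (handled by your ceiling argument and a second appeal to the inductive hypothesis at $\lceil x\rceil$). Your route is more self-contained but noticeably longer; the paper's is a one-liner once the Integer Avoidance Theorem is taken as known. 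The symmetry reduction of (2),(4) to (1),(3) via $G\mapsto -G$ is a clean organizational choice that the paper leaves implicit.
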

 
 \begin{proof}
  The four claims are proved simultaneously by induction on $\widetilde{b}(G)$.\\
  \quad When $G$ is equal to an integer, the claims are trivial.\\
  \quad Suppose otherwise. We shall deal with the case $\overline{\lambda}(G) \leq n$ first. In this case, $\overline{\rho}(G^L)+1 \leq n$ holds for each $G^L$. Fix a $G^L$. When $G^L$ is equal to an integer, this means $G^L \trel n$. Otherwise, by IH (induction hypothesis), there exists a $G^{LR}$ such that $G^{LR}\leq n$. This means $G^{L} \trel n$. Since $G^L$ was arbitrary, we get the claim.\\
  \quad The case $n \leq \overline{\rho}(G)$ is similar.\\
  \quad Consider the case $n<\overline{\lambda}(G)$. Then, by the definition of $\overline{\lambda}(G)$, there exists a $G^L$ such that $\overline{\rho}(G^L)+1>n$. When $G^L$ is equal to an integer, this means $n \leq G^L$. Otherwise, by IH, each $G^{LR}$ satisfies $n \trel G^{LR}$, that is, $n \leq G^L$ (by the Integer Avoidance Theorem).\\
  \quad The case $\overline{\rho}(G)<n$ can be dealt with similarly.
 \end{proof}

 Now, we are ready to prove Theorem \ref{existence}.
 \begin{proof}[Proof of Theorem \ref{existence}]
 The all six claims are proved simultaneously by induction on $\widetilde{b}(G)$.\\
 \quad When $G$ is equal to an integer (including the case of $\widetilde{b}(G)=0$), the all six claims are obviously true.\\
 \quad From now on, we consider the case $G$ is not equal to any integer.\\
 \quad First, observe that $\widetilde{\lambda}_t(G)=\max_{G^L}\{\rho_t(G^L)\}-t$ is, by IH, a diadic trajectory whose segments have slope $0$ or $-1$, and for sufficient large $t$, its slope is $-1$. Hence, $\widetilde{\lambda}_t(G)$ is (as a function of $t$) non-increasing, and eventually strictly-insreasing. Similarly, $\widetilde{\rho}_t(G)=\min_{G^R}\{\lambda_t(G^R)\}+t$ is a diadic trajectory whose segments have slope $0$ or $+1$, and for sufficient large $t$, its slope is $+1$. Hence, $\widetilde{\rho}_t(G)$ is non-decreasing, and eventually strictly-increasing.\\
 \quad Now, let us show that there is a minimum $t \in \diadic$ such that $\widetilde{\lambda}_t(G)=\widetilde{\rho}_t(G)$. By the observation above and Remark \ref{intermediate}, it suffices to show that $\widetilde{\lambda}_{-1}(G)>\widetilde{\rho}_{-1}(G)$.\\
 \quad Suppose otherwise. We have $\widetilde{\lambda}_{-1}(G) \leq \widetilde{\rho}_{-1}(G)$. 
 By IH, each proper subposition $H$ of $G$ is not frozen at $t=-1$ (i.e. $t(H)>-1$) unless $H$ is equal to an integer. Hence, using induction from the leaves to the root of the game tree of $G$, we can see that $\widetilde{\lambda}_{-1}(G)=\overline{\lambda}(G) \in \ZZ$ and $\widetilde{\rho}_{-1}(G)= \overline{\rho}(G) \in \ZZ$. Let $x:=\widetilde{\lambda}_{-1}(G)$, $y:=\widetilde{\rho}_{-1}(G)$. It is enough to show $G^L \trel x \leq y \trel G^R$ for every $G^L$ and $G^R$ (Then, by the Simplicity Theorem, it follows that $G$ is equal to an integer, which is impossible. As for the Simplicity Theorem, see \cite{siegel}), and this is an immediate consequence of Lemma \ref{integerstop}.\\
 \quad Now, the six claims can be verified as follows;\\
 \quad Consider (\ref{well-definedness}). Combining the above result and the monotonicity of $\widetilde{\lambda}_t(G)$ and $\widetilde{\rho}_t(G)$, it follows that there exists a minimum $-1< t \in \diadic$ such that $\widetilde{\lambda}_t(G)=\widetilde{\rho}_t(G)$, which means $t(G)>-1$ is well-defined. Therefore, $\lambda_t(G), \rho_t(G), G_t$ are all well-defined.\\
 \quad (\ref{decreasing})(\ref{increasing})(\ref{mast value}) are immediate consequences of the first observation above and the definitions of $\lambda_t(G)$ and $\rho_t(G)$.\\
 \quad (\ref{stops}) follows from a simple calculation:
 when $t>t(G)$, the claim is obvious. Otherwise,
 \begin{align*}
  \lambda_t(G)= \widetilde{\lambda}_t(G)= \max_{G^L} \{ \rho_t(G^L) -t\}
  = \max_{G^L} \{ R((G^L)_t-t) \}=L(\widetilde{G}_t)=L(G_t)
 \end{align*} 
 
 Here, the third equality is by IH and a basic property of stops (namely, $L(G+x)=L(G)+x$ if $x$ is equal to a number. See \cite{siegel} for reference). $\rho_t(G)=R(G_t)$ can be proved similarly.\\
 \quad (\ref{classify}) is now proved as follows. If $t<t(G)$, $\lambda_t(G) =\widetilde{\lambda}_t(G) > \widetilde{\rho}_t(G) =\rho_t(G)$ by the definition of $t(G)$, which means (by (\ref{stops})) $L(G_t)>R(G_t)$. This shows that $G_t$ is not numberish. If $t \geq t(G)$, $L(G_t)=\lambda_t(G)=\rho_t(G)=R(G_t)$, and hence $G_t$ is numberish. Especially when $t>t(G)$, $G_t$ is a number by the definition.  
 
 \end{proof}
 
 \section{More Precise Behavior of Temperature}
 The proof above does not tell us whether (When $t(G)> -1$) the numberish $G_{t(G)}$ is actually a number or not. In fact, it is not equal to a number. Also, it is interesting to ask what happens to $\widetilde{G}_t$ when $t>t(G)$, and in fact, it equals to a number. This section deals with these issues.\\
  \quad We begin with the following crucial lemma, which is powerful when combined with the Simplicity theorem:
 
 \begin{lemma}\label{comparison}
  Let $G \in \form$ and suppose that $G$ is not equal to any integer.
  \begin{enumerate}
   \item If $\widetilde{\lambda}_t(G)$ is a constant for $t \in (a,b]$ (where $a<b \in \diadic$), then, for each $t \in (a,b]$, $G^L_t-t \trel \widetilde{\lambda}_t(G)$ for any $G^L$ \footnote{Here, $(G^L)_t$ is denoted by $G^L_t$. The other notations (such as $G^R_t$, $G^{LR}_t$) follow the similar convention.}. 
   \item If $\widetilde{\rho}_t(G)$ is a constant for $t \in (a,b]$ (where $a<b \in \diadic$), then, for each $t \in (a,b]$, $\widetilde{\rho}_t(G) \trel G^R_t+t$ for any $G^R$.
   \item If $\widetilde{\lambda}_t(G)$ is strictly decreasing for $t \in (a,b]$ (where $a<b \in \diadic$), then, for each $t \in (a,b]$, there exists a $G^L$ such that $G^L_t-t \geq \widetilde{\lambda}_t(G)$.  
   \item If $\widetilde{\rho}_t(G)$ is strictly increasing for $t \in (a,b]$ (where $a<b \in \diadic$), then for each $t \in (a,b]$, there exists a $G^R$ such that $\widetilde{\rho}_t(G) \geq G^R_t+t$.  
  \end{enumerate}
 \end{lemma}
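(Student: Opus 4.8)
The plan is to prove all four parts simultaneously by induction on the formal birthday $\widetilde{b}(G)$, reducing to parts~(1) and~(3). From Definition~\ref{defoftemp} a routine induction on $\widetilde{b}$ establishes the negation symmetries $\lambda_t(-H)=-\rho_t(H)$, $\rho_t(-H)=-\lambda_t(H)$, $(-H)_t=-(H_t)$ (modulo equality) and $t(-H)=t(H)$; together with the identity $A\trel B\iff -B\trel -A$ (valid for $\trel$ as used in Lemma~\ref{integerstop}), these turn the statement of~(2) for $G$ into that of~(1) for $-G$, and the statement of~(4) for $G$ into that of~(3) for $-G$. So I only treat~(1) and~(3), assuming the full lemma for every proper subposition of $G$; when $G$ is equal to an integer there is nothing to prove.

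For~(1): fix $t\in(a,b]$ and put $c:=\widetilde{\lambda}_t(G)$; I show $G^L_t-t\trel c$ for every $G^L$. If $\rho_t(G^L)-t<c$, then by Theorem~\ref{existence}~(\ref{stops}) and the shift law for stops the Right stop of $G^L_t-t$ equals $\rho_t(G^L)-t<c$, and since Right stops are order-preserving and fix numbers, $G^L_t-t\not\ge c$, which is $G^L_t-t\trel c$. If $\rho_t(G^L)-t=c$, then (because $\widetilde{\lambda}_s(G)=c$ on $(a,b]$ while $\rho_s(G^L)-s$ is non-increasing in $s$ by Theorem~\ref{existence}~(\ref{increasing})) one is forced to $\rho_s(G^L)=c+s$ for all $s\in(a,t]$; this segment has slope $+1$, so it does not lie on the mast of $\rho_\cdot(G^L)$, which forces $s\le t(G^L)$ for these $s$ (hence $t\le t(G^L)$ and $G^L_t=\widetilde{G^L}_t$), shows $G^L$ is not an integer, and exhibits $\widetilde{\rho}_s(G^L)=\rho_s(G^L)=c+s$ as strictly increasing on $(a,t]$. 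Part~(4) of the induction hypothesis, applied to $G^L$ on $(a,t]$, then supplies a $G^{LR}$ with $\widetilde{\rho}_t(G^L)\ge(G^{LR})_t+t$, i.e.\ $(G^{LR})_t\le c$; as $(G^{LR})_t+t$ is a Right option of $G^L_t=\widetilde{G^L}_t$, the game $G^L_t-t$ has the Right option $(G^{LR})_t\le c$, and a game with a Right option $\le c$ cannot be $\ge c$, so again $G^L_t-t\trel c$.

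For~(3): strict decrease of $\widetilde{\lambda}_\cdot(G)$ on $(a,b]$ means every linear piece there has slope $-1$. Fix $t\in(a,b]$ and choose a dyadic $\delta>0$ small enough that $(t-\delta,t]\subseteq(a,b]$ and, on $(t-\delta,t]$, every $\rho_\cdot(G^L)$ is affine and the upper envelope $\widetilde{\lambda}_s(G)=\max_{G^L}\{\rho_s(G^L)-s\}$ is realized by a single Left option $G^L_0$; since this envelope has slope $-1$ there, $\rho_s(G^L_0)-s$ has slope $-1$, so $\rho_s(G^L_0)$ is a constant $x^L$ on $(t-\delta,t]$ and $\widetilde{\lambda}_t(G)=x^L-t$. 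If $t>t(G^L_0)$ (which includes $G^L_0$ an integer), then $G^L_{0,t}$ equals the number $\rho_t(G^L_0)=x^L$, so $G^L_{0,t}-t=x^L-t=\widetilde{\lambda}_t(G)$, which gives what is wanted. If $t\le t(G^L_0)$, then $G^L_{0,t}=\widetilde{G^L_0}_t$ and $\widetilde{\rho}_s(G^L_0)=\rho_s(G^L_0)=x^L$ is constant on $(t-\delta,t]$; part~(2) of the induction hypothesis, applied to $G^L_0$, then gives that every Right option $(G^{LR}_0)_t+t$ of $G^L_{0,t}$ satisfies $x^L\trel(G^{LR}_0)_t+t$, i.e.\ $(G^{LR}_0)_t+t\not\le x^L$. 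Since also $R(G^L_{0,t})=\rho_t(G^L_0)=x^L$, I invoke the recursive definition of ``$\ge$'' for the number $x^L$: it asks that no Right option of $G^L_{0,t}$ be $\le x^L$ (true) and that $G^L_{0,t}$ not be $\le$ any left option of the canonical form of $x^L$ (true, because such an option is $<x^L=R(G^L_{0,t})$ and Right stops are order-preserving); therefore $G^L_{0,t}\ge x^L$, i.e.\ $G^L_{0,t}-t\ge\widetilde{\lambda}_t(G)$.

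The main obstacle I anticipate is the selection of the persistent maximizer $G^L_0$ in~(3) (and its mirror~(4)): one must use the piecewise-linearity (convexity) of the upper envelope $\widetilde{\lambda}_\cdot(G)$ to see that just to the left of $t$ the maximum is realized by one fixed Left option whose $\rho$-trajectory is flat there, and one must make the half-open intervals line up so that the induction hypothesis (stated on intervals $(a',b']$) actually applies, while separately clearing the degenerate subcase $G^L_0$ an integer. A secondary delicate point is the ``some option realizes the bound'' direction in~(3) and~(4): there stops alone do not suffice, and one must return to the recursive definition of the order, using monotonicity of Right (resp.\ Left) stops to handle the clause about the parent of the target number.
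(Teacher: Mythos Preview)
Your argument is correct and follows essentially the same simultaneous induction and case analysis as the paper's proof: the reduction of (2),(4) to (1),(3) via negation is just a formalization of the paper's ``similarly,'' and your case splits in (1) and (3) match the paper's. The only cosmetic difference is that where the paper concludes $G^L_t-t\geq\widetilde{\lambda}_t(G)$ in part~(3) by citing the Integer (Number) Avoidance Theorem, you unfold the recursive definition of $\geq$ and verify the clause about the Left options of the number directly using the Right stop; both are valid and amount to the same thing.
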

 
 \begin{proof}
  The four claims are proved simultaneously by induction on $\widetilde{b}(G)$.\\
  \quad When $G$ is equal to an integer, the claims are obvious.\\
  \quad Suppose otherwise. Let us consider the case $\widetilde{\lambda}_t(G)$ is a constant $x$ for $t \in (a,b]$ (where $a<b \in \diadic$) first.
  We have to show that $G^L_t -t \trel x$ for each $G^l$ and $t \in (a,b]$. Fix a $G^L$, and $t \in (a,b]$. If $\rho_t(G^L) -t = R(G^L_t -t) < x$, then $G^L_t -t \trel x$ follows from a basic property of stops. Consider the case $\rho_t(G^L) -t = R(G^L_t -t) = x$. In this case, the slope of $\rho_s(G^L)$ on $s \in (t-\delta, t] \subset (a,b]$ ($\delta$ is a sufficiently small positive number such that the slope is uniquely determined) must be $+1$ (Otherwise, $\rho_s(G^L) -s > x$ holds for $s \in (t-\delta, t]$. Therefore, combined with the definition of $\widetilde{\lambda}_s(G)$, we get $\widetilde{\lambda}_s(G) >x$ for $s \in (t-\delta, t]$, which contradicts to the definition of $x$). And therefore, it follows that 
  \begin{align*}
  t \leq t(G^L),\ \  \rho_t(G^L)=\widetilde{\rho}_t(G^L), \ \ G^L_t=\widetilde{G^L}_t .
  \end{align*}
  Hence, IH can be applied, and we obtain a $G^{LR}$ which satisfies $\widetilde{\rho}_t(G^L)\geq G^{LR}_t+t$.
  It means 
  \begin{align*}
  x=\rho_t{G^L}-t=\widetilde{\rho}_t(G^L)-t \geq (G^{LR}+t)-t,
  \end{align*}
  and this gives $G^L_t-t \trel x$.\\
  \quad The case $\widetilde{\rho}_t(G)$ is a constant for $t \in (a,b]$ (where $a<b \in \diadic$) can be dealt with similarly.\\
  \quad Consider the case $\widetilde{\lambda}_t(G)$ is strictly decreasing for $t \in (a,b]$ (where $a<b \in \diadic$). Fix $t \in (a,b]$. The definition of $\widetilde{\lambda}_s(G)$ and the monotonicity of $\rho_s(G^L)$'s imply that there exists a $G^L$ which attains, for $s \in (t-\delta,t]$, $\rho_s(G^L)-s=\widetilde{\lambda}_s(G)$, and therefore $\rho_s(G^L)$ is constant (where $\delta$ is a sufficiently small positive number).
  Then, for such a $G^L$, $G^L_t-t \geq \widetilde{\lambda}_t(G)$ can be proved as follows:
  when $G_t$ is not equal to a number, then $t \leq t(G^L)$, and the argument above shows $\widetilde{\rho}_t(G^L)=\rho_t(G^L)$ is constant. Hence, IH implies (note that $G^L$ is not equal to an integer since $-\infty<t \leq t(G^L)$) $\widetilde{\rho}_t(G^L)\trel G^{LR}_t+t$ for each $G^{LR}$. It means $\widetilde{\lambda}_t(G)=\widetilde{\rho}_t(G^L)-t \trel (G^{LR}_t+t)-t$ for each $G^{LR}$. This gives $G^L_t-t \geq \widetilde{\lambda}_t(G)$ (by the Integer Avoidance Theorem). Now, consider the case in which $G^L_t$ is equal to a number. Then, recalling how we took the $G^L$, we get 
  \begin{align*}
  \widetilde{\lambda}_t(G)=\rho_t(G^L)-t= R(G^L_t)-t=G^L_t-t.
  \end{align*}
  This completes the proof of $G^L_t-t \geq \widetilde{\lambda}_t(G)$.\\
  \quad The remaining case can be treated similarly. 
 \end{proof}
 
 Now, we can prove the main claims of this section.
 
 \begin{prop}\label{tepid}
  Assume $G \in \form$, and $G$ is not equal to any integer. Then, $G_{t(G)}$ is not equal to a number (although it is numberish by Theorem \ref{existence}).
 \end{prop}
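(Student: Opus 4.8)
The plan is to argue by contradiction: assuming $G_{t(G)}$ is equal to a number, I would derive a contradiction with Lemma~\ref{comparison}. Write $t := t(G)$; by Theorem~\ref{existence}(\ref{well-definedness}) we have $t \in \diadic$ and $t > -1$, and by Theorem~\ref{existence}(\ref{classify}) the game $G_t$ equals the form $\widetilde{G}_t = \{\, G^L_t - t \mid G^R_t + t \,\}$ and is numberish. Using Theorem~\ref{existence}(\ref{stops}), set $x := L(G_t) = \lambda_t(G)$ and note that $x = R(G_t) = \rho_t(G)$, and also $x = \widetilde{\lambda}_t(G) = \widetilde{\rho}_t(G)$ since $t \le t(G)$. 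If $G_t$ were equal to a number, then comparing Left stops forces that number to be $x$; so it suffices to rule out $G_t = x$.

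Assume then $G_t = x$. The first ingredient is an elementary observation: whenever a game written in the form $\{A \mid B\}$ has value equal to a number $x$, every left option satisfies $A \trel x$ and every right option satisfies $x \trel B$. This follows by applying the recursive definitions of $\le$ and $\ge$ to the form $\{A \mid B\} - x$, which has value $0$ and which has $A - x$ among its left options and $B - x$ among its right options, and then translating by $x$. Applied to the form $\widetilde{G}_t$, it yields what I will call $(\ast)$: the bounds $G^L_t - t \trel x$ for every $G^L$, and $x \trel G^R_t + t$ for every $G^R$.

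The second and main ingredient is a local analysis of the trajectories near $t$. By Theorem~\ref{existence} and its proof, $\widetilde{\lambda}_s(G)$ is a diadic trajectory with slopes in $\{0,-1\}$, $\widetilde{\rho}_s(G)$ is one with slopes in $\{0,+1\}$, the difference $\widetilde{\lambda}_s(G) - \widetilde{\rho}_s(G)$ is non-increasing, it vanishes at $s = t$, and (here $t > -1$ and the definition of $t(G)$ are used) it is strictly positive for $s < t$. Choosing a small positive $\delta \in \diadic$ such that both $\widetilde{\lambda}_s(G)$ and $\widetilde{\rho}_s(G)$ are linear on $(t-\delta, t]$, the difference is then linear, nonnegative, positive at the left endpoint and $0$ at $t$ on that interval, hence strictly decreasing, so its slope there is $-1$ or $-2$. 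Therefore at least one of the following holds on $(t-\delta,t]$: $\widetilde{\lambda}_s(G)$ is strictly decreasing, or $\widetilde{\rho}_s(G)$ is strictly increasing. In the former case, the third clause of Lemma~\ref{comparison} (with $a = t-\delta$, $b = t$) produces a $G^L$ with $G^L_t - t \ge \widetilde{\lambda}_t(G) = x$, contradicting the first half of $(\ast)$; in the latter case, the fourth clause of Lemma~\ref{comparison} produces a $G^R$ with $x = \widetilde{\rho}_t(G) \ge G^R_t + t$, contradicting the second half of $(\ast)$. Either alternative yields the desired contradiction.

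I expect the delicate step to be this final paragraph — being sure that one of the two one-sided monotonicity alternatives genuinely holds on a left-neighbourhood of $t(G)$, and pairing it with the correct clause of Lemma~\ref{comparison}; by contrast, the stop-based reduction and the remark about forms with numerical value are routine once Theorem~\ref{existence} and the results of \cite{siegel} are available. It is also worth double-checking that the number $x$ forced on $G_t$ by the Left-stop comparison is exactly the quantity $\widetilde{\lambda}_{t(G)}(G) = \widetilde{\rho}_{t(G)}(G)$ used in Definition~\ref{defoftemp} to extend the definitions beyond the freezing point, so that $(\ast)$ concerns this value and no circularity creeps in through the stops.
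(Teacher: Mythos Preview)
Your proof is correct and follows essentially the same route as the paper's: argue by contradiction, use minimality of $t(G)$ to force at least one of $\widetilde{\lambda}_s(G)$, $\widetilde{\rho}_s(G)$ to have nonzero slope just to the left of $t(G)$, and then apply the appropriate clause of Lemma~\ref{comparison} to produce an option of $\widetilde{G}_{t(G)}$ violating the option-bound that any number must satisfy. The only cosmetic difference is that the paper names the Simplicity Theorem for the step you call $(\ast)$, whereas you derive it directly from the recursive definition of $\le$; your slope-difference argument is also a slightly more explicit version of the paper's ``at least one has nonzero slope'' sentence.
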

 
 \begin{proof}
  Suppose otherwise. Then, $G_{t(G)}=L(G_{t(G)})=\widetilde{\lambda}_t(G)$, and $G_{t(G)}=R(G_{t(G)})=\widetilde{\rho}_t(G)$. Now, since $t=t(G)$ is the minimum $t$ such that $\widetilde{\lambda}_t(G)=\widetilde{\rho}_t(G)$, at least one of the diadic trajectories $\widetilde{\lambda}_t(G)$ or $\widetilde{\rho}_t(G)$ has non-zero slope on $t\in (t(G)-\delta,t(G)]$ (where $\delta$ is a sufficiently small positive number). Consider the case $\widetilde{\lambda}_t(G)$ has non-zero slope. Then, it is strictly decreasing on $t \in (t(G)-\delta,t(G)]$, so by Lemma \ref{comparison}, there exists $G^L$ such that $G^L_{t(G)}-t(G) \geq \widetilde{\lambda}_t(G_{t(G)})=G_{t(G)}$. Since the LHS is a left option of $G_{t(G)}$, together with the Simplicity theorem, it contradicts the assumption $G_{t(G)}$ is equal to a number.\\
  \quad The case $\widetilde{\rho}_t(G)$ has non-zero slope can be treated symmetrically, and hence, the claim is proved. 
 \end{proof}
 
 \begin{prop}\label{Gtildenumber}
 Let $G \in \form$, and $t>t(G)$. Then, 
  $\widetilde{G}_t$ is equal to a number.
 \end{prop}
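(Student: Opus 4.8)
Note that, in contrast to the earlier results of this section, the proof needs no induction on $\widetilde{b}(G)$: Theorem \ref{existence} and Lemma \ref{comparison}, already established for all games, do all the work. The plan is to apply the Simplicity Theorem to the form $\widetilde{G}_t = \{(G^L)_t - t \mid (G^R)_t + t\}$: it suffices to produce a number $z \in \diadic$ with $(G^L)_t - t \trel z$ for every $G^L$ and $z \trel (G^R)_t + t$ for every $G^R$, for then $\widetilde{G}_t$ equals the simplest such number, in particular a number. We may assume $G$ is not equal to an integer (otherwise $\widetilde{G}_t$ is not defined). Recall that $\widetilde{\lambda}_s(G) = \max_{G^L}\{\rho_s(G^L)\} - s$ is non-increasing in $s$ and $\widetilde{\rho}_s(G) = \min_{G^R}\{\lambda_s(G^R)\} + s$ is non-decreasing (by Theorem \ref{existence}(\ref{increasing}) and Remark \ref{intermediate}); together with $\widetilde{\lambda}_{t(G)}(G) = \widetilde{\rho}_{t(G)}(G)$ this gives $\widetilde{\lambda}_s(G) \le \widetilde{\rho}_s(G)$ for all $s \ge t(G)$, so in particular $\widetilde{\lambda}_t(G) \le \widetilde{\rho}_t(G)$.

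The easy case is $\widetilde{\lambda}_t(G) < \widetilde{\rho}_t(G)$. Take $z := \tfrac12\big(\widetilde{\lambda}_t(G)+\widetilde{\rho}_t(G)\big) \in \diadic$, so that $\widetilde{\lambda}_t(G) < z < \widetilde{\rho}_t(G)$. For any $G^L$, by Theorem \ref{existence}(\ref{stops}) and basic properties of stops (see \cite{siegel}) we have $R\big((G^L)_t - t\big) = \rho_t(G^L) - t \le \widetilde{\lambda}_t(G) < z$; since $H \ge z$ forces $R(H) \ge z$ when $z$ is a number, this yields $(G^L)_t - t \trel z$. Symmetrically, $L\big((G^R)_t + t\big) = \lambda_t(G^R) + t \ge \widetilde{\rho}_t(G) > z$ yields $z \trel (G^R)_t + t$. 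The Simplicity Theorem now finishes this case.

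The substantive case is $\widetilde{\lambda}_t(G) = \widetilde{\rho}_t(G)$, where the only possible separator is $z := \widetilde{\lambda}_t(G) = \widetilde{\rho}_t(G) \in \diadic$ itself, and the crude estimate $R\big((G^L)_t - t\big) \le z$ no longer excludes $(G^L)_t - t \ge z$. Here Lemma \ref{comparison} supplies the missing sharpness. Since $\widetilde{\lambda}$ is non-increasing, $\widetilde{\rho}$ is non-decreasing, and the two agree at both $t(G)$ and $t$, a short monotonicity argument forces them to be constant and equal to $z$ on the entire interval $[t(G), t]$; in particular both are constant on $(t(G), t]$, a non-degenerate interval with endpoints in $\diadic$ (non-degenerate because $t(G) < t$, and $t(G) \in \diadic$ since $G$ is not an integer). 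Applying Lemma \ref{comparison}(1) with $(a,b] = (t(G),t]$ gives $(G^L)_t - t \trel \widetilde{\lambda}_t(G) = z$ for every $G^L$, and Lemma \ref{comparison}(2) gives $z = \widetilde{\rho}_t(G) \trel (G^R)_t + t$ for every $G^R$; the Simplicity Theorem again concludes that $\widetilde{G}_t$ is a number. I expect this last case to be the only real obstacle: one must realize that $\widetilde{\lambda}$ and $\widetilde{\rho}$ can remain glued together past the freezing point $t(G)$, and that it is exactly Lemma \ref{comparison} — rather than the soft comparisons among stops — that guarantees no option of $\widetilde{G}_t$ reaches the mast value $z$.
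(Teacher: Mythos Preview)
Your proof is correct and takes essentially the same approach as the paper: exhibit a separating number and apply the Simplicity Theorem, using the basic stop property $R(H)<z \Rightarrow H \trel z$ for strict inequalities and Lemma \ref{comparison} for the boundary case. The only cosmetic difference is that you split globally on whether $\widetilde{\lambda}_t(G) < \widetilde{\rho}_t(G)$ or $=$ (taking a midpoint separator in the former), whereas the paper uniformly uses the mast value $x := \widetilde{\lambda}_{t(G)}(G)$ as the separator and handles each option individually.
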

 \begin{rmk}
  Be aware that $\widetilde{G}_t$ above can be different from $G_t$.
 \end{rmk}
 
 \begin{proof}
  When $G$ is equal to an integer, the claim is obvious. 
  Suppose otherwise. Let 
  \begin{align*}
  x:=\widetilde{\lambda}_{t(G)}(G)=\widetilde{\rho}_{t(G)}(G).
  \end{align*}
   By the Simplicity Theorem, it suffices to show $G^L_t-t \trel x$ and $x \trel G^R_t+t$ for any $G^L, G^R$ and $t >t(G)$. Fix a $G^L$ and $t>t(G)$. Since $\widetilde{\lambda}_{t(G)}(G)=x$, the monotonicity of $\widetilde{\lambda}_{s}(G)$ gives $\widetilde{\lambda}_{t}(G) \leq x$, which gives 
   \begin{align*}
   R(G^L_t-t) = \rho_t(G^L)-t \leq x.
   \end{align*}
    If $R(G^L_t-t)<x$, then $G^L_t-t \trel x$ follows from a basic property of stops. If $R(G^L_t-t)=x$, then the monotonicity of $\widetilde{\lambda}_{t}(G)$ and the fact $\widetilde{\lambda}_{t(G)}(G)=x$ imply that $\widetilde{\lambda}_{s}(G)$ is constant for $s \in [t(G),t]$. Hence, by the Lemma \ref{comparison}, $G^L_t-t \trel x$. $G^L$ was arbitrary.\\
  \quad The claim $x \trel G^R_t+t$ can be proved symmetrically.  
 \end{proof}

 \section{$(\cdot)_t$ Defines a Homomorphism from $\games$ to $\games$}
  Now, we will prove the two main theorems, that is, the operation $(\cdot)_t$ is order-preserving (hence, invariant with respect to $=$, and induces a map from $\games$ to $\games$), and the induced map is a group homomorphism.
 
  \begin{thm} \label{orderpreserving}
   For each $-1< t \in \diadic$, $G,H \in \form$, the following hold:
   \begin{enumerate}
    \item $G \geq H \Longrightarrow G_t \geq H_t$.
    \item $G \trer H \Longrightarrow G_t \trer H_t-t$.
   \end{enumerate}
  \end{thm}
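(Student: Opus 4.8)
The plan is to prove both claims simultaneously by induction on $\widetilde{b}(G)+\widetilde{b}(H)$, following the same pattern used throughout the paper. The natural tool is the characterization of order and of $\trer$ in terms of stops and options, together with the fact (Theorem \ref{existence}(\ref{stops})) that $L(G_t)=\lambda_t(G)$ and $R(G_t)=\rho_t(G)$. For the first claim, I would use that $G\geq H$ is equivalent to ``no $G^R\leq H$ and no $G\leq H^L$,'' and try to show the analogous statement for $G_t$ and $H_t$: namely that every right option of $G_t$ fails to be $\leq H_t$ and every left option of $H_t$ fails to be $\geq G_t$. Since the options of $G_t$ are (up to the frozen case) the $G^R_t+t$ and the options of $H_t$ are the $H^L_t-t$, an inequality like $G^R_t+t\leq H_t$ should be pushed, via the induction hypothesis applied to $G^R$ and $H$, back to a corresponding statement about $G^R$ and $H$ that contradicts $G\geq H$. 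One has to be careful to handle separately the three regimes $t<t(G)$, $t=t(G)$, $t>t(G)$ (and similarly for $H$), because in the last two regimes $G_t$ is numberish or a number and the ``options'' description degenerates; here Proposition \ref{tepid}, Proposition \ref{Gtildenumber}, and basic properties of stops of numbers will be needed to conclude directly.

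For the second claim, $G\trer H$ means $L(G)\geq R(H)$, i.e. $L(G)\geq R(H)$ in the sense of stops, together with (for formal games) a statement about options; more usefully, $G\trer H$ is equivalent to: either $G$ is already $\trel$-above $H$ through an option, or $R(H)\leq L(G)$ with the right structural condition. The target $G_t\trer H_t-t$ unwinds, using $L(G_t)=\lambda_t(G)$ and $R(H_t-t)=\rho_t(H)-t$, to the inequality $\lambda_t(G)\geq \rho_t(H)-t$ plus the option condition. The key computation is that $\lambda_t(G)\geq \widetilde{\lambda}_t(G)=\max_{G^L}\{\rho_t(G^L)-t\}$ when $t\le t(G)$ and $\lambda_t(G)$ is the mast value otherwise; comparing this against $\widetilde{\rho}_t(H)=\min_{H^R}\{\lambda_t(H^R)+t\}$ and applying the induction hypothesis to a witnessing pair $(G^L,H^R)$ coming from $G\trer H$ (which gives some $G^L$ and $H^R$ with $G^L\trer H^R$ or a direct $\trel$) should close the loop. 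Again the frozen regimes must be checked by hand: when $t\ge t(H)$ the game $H_t$ is numberish/number, and $R(H_t-t)=R(H_t)-t$ collapses to an inequality of numbers that follows from the monotonicity of the thermographs established in Theorem \ref{existence}.

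I expect the main obstacle to be the bookkeeping around the three regimes and their interaction: when $G$ and $H$ have different temperatures, at a given $t$ one of $G_t,H_t$ may still be ``hot'' (not numberish) while the other is already a number, and the clean option-based argument only applies on the overlap where both are not numberish. On the frozen side one must instead argue purely with stops and with the monotone diadic trajectories $\lambda_t,\rho_t$, invoking that $G\geq H$ (resp. $G\trer H$) forces the corresponding inequality of mast values — and here it will be important to use that the mast values of $\lambda_t(G)$ and $\rho_t(G)$ coincide (Theorem \ref{existence}(\ref{mast value})), which is essentially the mean value, and that mean value is order-preserving. A secondary subtlety is that $G,H$ range over \emph{formal} games, so ``$G_t\geq H_t$'' and ``$G_t\trer H_t-t$'' are statements about game values; one should phrase the induction so that replacing $G$ or $H$ by an option is legitimate, and keep track of the canonical-form substitutions hidden in the $t>t(G)$ branch of Definition \ref{defoftemp} so that no well-definedness issue sneaks in.
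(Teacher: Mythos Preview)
Your high-level plan --- simultaneous induction on $\widetilde{b}(G)+\widetilde{b}(H)$ with a case split according to the position of $t$ relative to $t(G)$ and $t(H)$ --- matches the paper's, but the way you propose to use the induction hypothesis is where the argument breaks. For claim~1 you want to rule out $G^R_t+t\leq H_t$ by assuming it and ``pushing back'' via IH to a statement about $G^R$ and $H$; but the induction hypothesis only gives the \emph{forward} implication (order on games $\Rightarrow$ order on cooled games), never the reverse, so nothing can be pushed back. The paper's key observation is that the two claims interlock: from $G\geq H$ one has $G^R\trer H$ for every $G^R$ and $G\trer H^L$ for every $H^L$, and it is the IH for \emph{claim~2} that then yields $G^R_t+t\trer H_t$ and $G_t\trer H^L_t-t$ directly. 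Symmetrically, claim~2 is proved using the IH for claim~1: $G\trer H$ does \emph{not} mean $L(G)\geq R(H)$ (take $G=H=\ast$), nor does it produce a pair $(G^L,H^R)$ with $G^L\trer H^R$; it means precisely that some $G^L\geq H$ or some $H^R$ satisfies $G\geq H^R$, and applying claim~1's IH to that single option gives Left a winning move in $G_t-H_t+t$.

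There is a second gap in the frozen regime. When $t>\max\{t(G),t(H)\}$ you invoke ``mean value is order-preserving,'' but that is a \emph{consequence} of the present theorem, so the appeal is circular. The paper instead sets $t_0:=\max\{t(G),t(H)\}>-1$, uses the already-handled cases to get $G_{t_0}\geq H_{t_0}$ (resp.\ $G_{t_0}\trer H_{t_0}-t_0$), and then passes to $t>t_0$ via the fact that $G_{t_0},H_{t_0}$ are infinitesimally close to the numbers $G_t,H_t$. Finally, the mixed regimes (exactly one of $G_t,H_t$ a number) require tools you do not name: for claim~1 the Number Avoidance Theorem lets one ignore moves in the number component, and for claim~2, when $G_t$ is already a number but $H_t$ is not and the stop inequality degenerates to an equality $G_t=\rho_t(H)-t$, one needs Lemma~\ref{comparison} (the slope of $\rho_s(H)$ is then forced to be strictly increasing, yielding an $H^R$ with $G_t\geq H^R_t+t$).
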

\begin{proof}
The two claims are proved simultaneously by induction on $\widetilde{b}(G)+\widetilde{b}(H)$.\\
\quad If both $G$ and $H$ are equal to integers, the claims are easy.
Suppose otherwise.\\
\quad Assume that $G\geq H$ and consider the game $G_t-H_t$.
We know $G^R\trer H$ for any $G^R$ and $G\trer H^L$ for any $H^L$, so IH implies that $G^R_t+t-H_t\trer 0$ and $G_t-H^L_t+t\trer 0$.\\
\quad If $-1<t\leq \min\{t(G),t(H)\}$, then $G_t-H_t= \widetilde{G}_t-\widetilde{H}_t$ and its Right options are $G^R_t+t-H_t(\trer 0)$ or $G_t-H^L_t+t(\trer 0)$, so Left can win.\\
\quad If $t(G)<t\leq t(H)$, then $G_t$ is a number, and $H_t=\widetilde{H}_t$ is not equal to a number by Proposition \ref{tepid}.
By the Number Avoidance Theorem, we do not have to cosider Right's move on $G_t$.
If Right moves on $H_t$, Left can win since $G_t-H^L_t+t \trer 0$.
The same argument works in the case $t(H)<t\leq t(G)$.\\
\quad In the remaining case $\max\{t(G),t(H)\}<t$, let $t_0:=\max\{t(G),t(H)\}$.
We know $t_0>-1$ because either $G$ or $H$ is not equal to an integer, and we also know $G_{t_0}\geq H_{t_0}$ as above.
Since $G_{t_0},H_{t_0}$ are infinitesimally close to the numbers $G_t,H_t$ respectively, we obtain the desired inequality $G_t\geq H_t$.\\
\quad Next, we will prove the second claim, assume that $G\trer H$ and consider the game $G_t-H_t+t$.
We know $G^L\geq H$ for some $G^L$ or $G\geq H^R$ for some $H^R$, so IH implies that $G^L_t-H_t\geq 0$ for some $G^L$ or $G_t-H^R_t\geq 0$ for some $H^R$.\\
\quad If $-1<t\leq \min\{t(G),t(H)\}$, then $G_t-H_t+t= \widetilde{G}_t-\widetilde{H}_t+t$ and its Left options are $G^L_t-t-H_t+t=G^L_t-H_t$ or $G_t-H^R_t-t+t=G_t-H^R_t$, so left can win.\\
\quad Let $t(G)<t\leq t(H)$.
In this case, $G_t$ is a number, and $H_t=\widetilde{H}_t$ is not equal to a number by Proposition \ref{tepid}.
If $G$ is originally equal to an integer, then $G\trer H$ implies $G\geq H^R$ for some $H^R$ by the Integer Avoidance theorem.
Therefore $G_t-H^R_t\geq 0$ for some $H^R$.
In the game $G_t-H_t+t=G_t-\widetilde{H}_t+t$, Left can move to 
\begin{align*}
G_t-H^R_t-t+t=G_t-H^R_t\geq 0,
\end{align*}
 so left can win.
If $G$ is not equal to an integer, then $t(G)>-1$, so we know $G_{t(G)}\trer H_{t(G)}-t(G)$ by the above case. 
If $G_t>\rho_{t}(H)-t$, then we get $G_t\trer H_t-t$ by basic properties of stops.
If $G_t=\rho_{t}(H)-t$, then $\rho_s(H)-s$ is a constant for $s\in (t(G),t]$, so $\rho_s(H)$ is strictly increasing for $s\in (t(G),t]$.
By Lemma \ref{comparison}, $G_t=\rho_t(H)\geq H^R_t+t$ for some $H^R$.
This implies $G_t\trer H_t-t$.
The same argument works in the case $t(H)<t\leq t(G)$.\\
\quad  In the remaining case $\max\{t(G),t(H)\}<t$, let $t_0:=\max\{t(G),t(H)\}$.
We know $t_0>-1$ because either $G$ or $H$ is not equal to an integer, and we also know $G_{t_0}\trer H_{t_0}-t_0$ as above.
Since $G_{t_0},H_{t_0}$ is infinitesimally close to the number $G_t,H_t$ respectively, we obtain $G_t\geq H_t-t_0$ and the desired inequality $G_t\trer H_t-t$.
  \end{proof}
  
  
  \begin{thm} \label{homomorphism}
   For each $-1< t \in \diadic$, $G,H \in \form$, 
   \begin{align*}
    (G+H)_t=G_t+H_t
   \end{align*}
   holds.
  \end{thm}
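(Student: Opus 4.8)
The plan is to induct on $\widetilde{b}(G)+\widetilde{b}(H)$, building on two features of the preceding development. First, Theorem \ref{orderpreserving} tells us that $(\cdot)_t$ respects $=$, so I may freely replace $G$ or $H$ by any game of the same value; in particular I will use the standard fact that $G+n=\{G^L+n\mid G^R+n\}$ as values, for $n\in\ZZ$ and $G$ not a number. Second — and this is the point that makes everything recursive — the definition together with Proposition \ref{Gtildenumber} shows that, whenever $G$ is not equal to an integer, $G_t$ and $\widetilde{G}_t$ coincide \emph{as game values}: they agree by definition for $t\le t(G)$, while for $t>t(G)$ Proposition \ref{Gtildenumber} identifies $\widetilde{G}_t$ with the number $\widetilde{\lambda}_{t(G)}(G)$, which is exactly the value of $G_t$. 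Consequently, as long as no integer is in sight, I may always compute via the recursion $G_t=\{G^L_t-t\mid G^R_t+t\}$, and likewise for $G+H$.

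First I would record two auxiliary facts, each by a quick induction on $\widetilde{b}(G)$: (A) $(G+n)_t=G_t+n$ for every $n\in\ZZ$; and (B) $(-G)_t=-(G_t)$. Both fall straight out of the recursion: for (A), if $G$ is not an integer then
\begin{align*}
(G+n)_t=\{(G^L+n)_t-t\mid(G^R+n)_t+t\}=\{G^L_t+n-t\mid G^R_t+n+t\}=G_t+n
\end{align*}
by the induction hypothesis, and the integer case is trivial; (B) is the same computation, using the evident negation-symmetry of the whole definition.

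Then comes the main induction, with cases according to how many of $G,H,G+H$ are equal to integers. If $G$ and $H$ are both integers, or exactly one of them is, the conclusion is immediate from (A) and value-invariance. If neither $G$ nor $H$ is an integer but $G+H$ is, say $G+H=k\in\ZZ$ — which genuinely happens, e.g.\ $G=H=*$ with $k=0$ — then $H=k-G$ as values, so (A) and (B) give $H_t=\bigl(k+(-G)\bigr)_t=k-G_t$, whence $G_t+H_t=k=(G+H)_t$. Finally, if none of $G,H,G+H$ is an integer, then each of $(G+H)_t$, $G_t$, $H_t$ equals its tilde-version as a value, and applying the induction hypothesis to the options of $G+H$ gives
\begin{align*}
(G+H)_t&=\widetilde{(G+H)}_t=\{(G^L+H)_t-t,\ (G+H^L)_t-t\mid(G^R+H)_t+t,\ (G+H^R)_t+t\}\\
&=\{G^L_t+H_t-t,\ G_t+H^L_t-t\mid G^R_t+H_t+t,\ G_t+H^R_t+t\};
\end{align*}
but the right-hand side is exactly the set of options of $\{G^L_t-t\mid G^R_t+t\}+\{H^L_t-t\mid H^R_t+t\}=\widetilde{G}_t+\widetilde{H}_t=G_t+H_t$, so the two games have the same options and are therefore equal.

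I do not expect a serious obstacle: the content is concentrated in the observation of the first paragraph — that $G_t$ may always be traded for $\widetilde{G}_t$ at the level of values — so it is really Proposition \ref{Gtildenumber}, i.e.\ the analysis of cooling past the freezing point, that does the heavy lifting. The one place where pure recursion breaks down is the case $G+H$ equal to an integer with $G,H$ not (the $*+*=0$ phenomenon), since there the recursion would produce $\widetilde{(G+H)}_t$ rather than the constant $(G+H)_t$; that is exactly the situation handled by the auxiliary lemmas (A) and (B).
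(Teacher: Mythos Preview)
Your argument hinges on the claim that, for any $G$ not equal to an integer, $G_t$ and $\widetilde{G}_t$ coincide \emph{as game values for every $t>-1$}. That claim is false. Proposition \ref{Gtildenumber} only shows that $\widetilde{G}_t$ is equal to \emph{some} number when $t>t(G)$; its proof verifies that the mast value $x=\widetilde{\lambda}_{t(G)}(G)$ lies in the simplicity interval of $\widetilde{G}_t$, but it does not show that $x$ is the simplest number there. A concrete counterexample: take $G=\{2\mid 0\}$. Then $t(G)=1$ and $G_t=1$ for $t>1$, while $\widetilde{G}_3=\{2-3\mid 0+3\}=\{-1\mid 3\}=0$. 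So $\widetilde{G}_3\ne G_3$. This breaks the backbone of your ``none of $G,H,G+H$ is an integer'' case: the first equality $(G+H)_t=\widetilde{(G+H)}_t$ and the last equality $\widetilde{G}_t+\widetilde{H}_t=G_t+H_t$ both rely on the false identification once $t$ exceeds the relevant temperatures.

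The paper avoids this trap by never invoking $G_t=\widetilde{G}_t$ beyond the freezing point. It fixes $t(G)\le t(H)$, proves $\widetilde{(G+H)}_t=G_t+H_t$ only for $t\in(-1,t(H)]$ (splitting into $t\le t(G)$, where your recursion is valid, and $t(G)<t\le t(H)$, handled by Number Translation and the Gift Horse Principle via Theorem \ref{orderpreserving}), and then separately argues that $\widetilde{(G+H)}_t=(G+H)_t$ on that interval --- the nontrivial subcase $t(G+H)<t\le t(H)$ requires a Simplicity-Theorem uniqueness argument showing the relevant interval is a singleton. For $t>t(H)$ it finishes with an infinitesimal-closeness comparison at $t_0=t(H)$. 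Your auxiliary lemmas (A) and (B) are fine (and indeed $t(G+n)=t(G)$, $t(-G)=t(G)$ make them go through), but they do not rescue the main case; you still need a mechanism that controls what happens when some but not all of $G,H,G+H$ have already frozen.
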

\begin{proof}
Induction on $\widetilde{b}(G)+\widetilde{b}(H)$.
If both $G$ and $H$ are equal to integers, the claim is obvious.
Suppose otherwise.
Without loss of generality, we can assume that $t(G)\leq t(H)$, and hence $H$ is not equal to an integer.
First, we will show $\widetilde{(G+H)}_t=G_t+H_t$ for $t\in (-1, t(H)]$.
If $-1<t\leq t(G)$, then $G_t= \widetilde{G}_t$ and $H_t=\widetilde{H}_t$, so
\begin{align*}
\widetilde{(G+H)}_t&=\{(G^L+H)_t-t,(G+H^L)_t-t\mid (G^R+H)_t+t,(G+H^R)_t+t\}\\
&=\{G^L_t+H_t-t,G_t+H^L_t-t\mid G^R+H_t+t,G+H^R_t+t\}\ (\text{by induction})\\
&=G_t+H_t.
\end{align*}
If $t(G)<t\leq t(H)$, then $G_t$ is a number and $H_t=\widetilde{H}_t$ is not equal to a number, so by the Number Translation Theorem, 
\[
G_t+H_t=\{G_t+H^L_t-t\mid G_t+H^R_t+t\}
\]
Since $G^L\trel G$ always holds, we have $G^L_t-t\trel G_t$ by Theorem \ref{orderpreserving}, and hence $G^L_t+H_t-t\trel G_t+H_t$.
By Gift Horse Principle, we have
\[
G_t+H_t=\{G^L_t+H_t-t, G_t+H^L_t-t\mid G_t+H^R_t+t\}.
\]
Similarly, we can prove
\[
G_t+H_t=\{G^L_t+H_t-t, G_t+H^L_t-t\mid G^R_t+H_t+t,G_t+H^R_t-t\}.
\]
It means $\widetilde{(G+H)}_t=G_t+H_t$.\\
\quad We have obtained $\widetilde{(G+H)}_t=G_t+H_t$ for $t\in (-1, t(H)]$.
Hence, it is enough to show $\widetilde{(G+H)}_t=(G+H)_t$.
If $t\leq t(G+H)$, it is clear.
If $t(G+H)<t\leq t(H)$, then $\widetilde{(G+H)}_t=G_t+H_t$ is equal to a number by Proposition \ref{Gtildenumber}, and $H_t$ is not equal to a number by Proposition \ref{tepid}, so $G_t$ is not equal to a number.
It means $t\leq t(G)$.
Let 
\[
\mathcal{I}:=\left\{ x\in \diadic \mid (\widetilde{(G+H)}_t)^L\trel x\trel (\widetilde{(G+H)}_t)^R \text{for every} (\widetilde{(G+H)}_t)^L,(\widetilde{(G+H)}_t)^R \right\}.
\]
By the proof of Proposition \ref{Gtildenumber}, we know $(G+H)_t\in \mathcal{I}$.
By the Simplicity Theorem, it suffices to show $\#\mathcal{I}\leq 1$.
Suppose $x,y\in \mathcal{I}$ and $x<y$.
$G_t+H^L_t-t\trel x$ implies
\begin{align*}
R(G_{t(G)})+R(H^L_{t(G)}-t(G))&\leq R(G_{t(G)}+H^L_{t(G)}-t(G))\\
&\leq R((G+H^L)_{t(G)}-t(G))\ (\text{by induction})\\
&\leq R((G+H^L)_t-t)\\
&\leq R(G_t+H^L_t-t)\ (\text{by induction})\\
&\leq x.
\end{align*}
Similarly, $y\trel G_t+H^R_t-t$ implies $y\leq L(G_{t(G)})+L(H^R_{t(G)}+t(G))$.
Because $x<y$ and $R(G_{t(G)})=L(G_{t(G)})$, we obtain $R(H^L_{t(G)}-t(G))<L(H^R_{t(G)}+t(G))$.
Since $H^L$ and $H^R$ are arbitrary, it means $\widetilde{\lambda}_{t(G)}(H)<\widetilde{\rho}_{t(G)}(H)$, that is, $t(H)<t(G)$.
Contradiction.\\
\quad We have proved that $(G+H)_t=G_t+H_t$ for $t\in (-1, t(H)]$ so far.
The remaining case is $t>t(H)$.
Let $t_0:=t(H)$.
We know $t_0>-1$ because $H$ is not equal to an integer, $(G+H)_{t_0}=G_{t_0}+H_{t_0}$ as above.
Since $G_{t_0},H_{t_0}$ are numberish, $(G+H)_{t_0}$ is also numberish.
Therefore, $G_{t_0},H_{t_0},(G+H)_{t_0}$ are infinitesimally close to $G_t,H_t,(G+H)_t$ respectively.
It implies $(G+H)_t=G_t+H_t$.
\end{proof}  
  
  \section{Consequences}
  The claims and proofs other than Theorem\ref{integer} in this section essentially follow those in \cite{siegel}, but we will present them in our words for completeness.
  
  \begin{prop}\label{G_0=G}
   $G_0 = G$.
  \end{prop}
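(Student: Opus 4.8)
The plan is to induct on the formal birthday $\widetilde{b}(G)$. If $G$ is equal to an integer, then $G_0=G$ is immediate from the first clause of Definition~\ref{defoftemp}, so assume $G$ is not equal to an integer; by Theorem~\ref{existence}(\ref{well-definedness}) we then have $t(G)>-1$. Every option of $G$ has strictly smaller formal birthday, so the induction hypothesis gives $(G^L)_0=G^L$ and $(G^R)_0=G^R$ as game values, whence
\[
 \widetilde{G}_0=\{\,(G^L)_0-0\mid(G^R)_0+0\,\}=\{\,(G^L)_0\mid(G^R)_0\,\}=\{\,G^L\mid G^R\,\}=G .
\]
This identity, together with the recursion defining $G_t$, is the whole engine of the argument, and the only thing left is to match $G_0$ against $\widetilde{G}_0$.

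Next I would split according to the sign of $t(G)$. If $0\le t(G)$, then Definition~\ref{defoftemp} gives $G_0=\widetilde{G}_0$, and the display above already shows $G_0=G$. If instead $-1<t(G)<0$, then $0>t(G)$, so Definition~\ref{defoftemp} gives $G_0=x_{can}$, where
\[
 x:=\lambda_{t(G)}(G)=\rho_{t(G)}(G)=\widetilde{\lambda}_{t(G)}(G)=\widetilde{\rho}_{t(G)}(G)\in\diadic ,
\]
so it remains only to see that $G$ is equal to the number $x$. Since $0>t(G)$, Proposition~\ref{Gtildenumber} applies and shows that $\widetilde{G}_0$ is equal to a number; moreover the proof of that proposition — which, for $t>t(G)$, establishes $G^L_t-t\trel x\trel G^R_t+t$ for all options, invokes the Simplicity Theorem, and uses Lemma~\ref{comparison} to dispose of the boundary case $R(G^L_t-t)=x$ — identifies this number as exactly $x$. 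Combining with the identity $\widetilde{G}_0=G$ from the first paragraph, we get $G=x$, hence $G_0=x_{can}=x=G$, which closes the induction.

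The only real obstacle is the negative-temperature case: there $G_0$ is by definition the (canonical form of the) mast value $x$ rather than $\widetilde{G}_0$, so one genuinely has to know that this $x$ is the value of $G$ — equivalently, that $G$ is already equal to a number. I would supply this by leaning on the analysis already carried out for Proposition~\ref{Gtildenumber} (pinning down the value of $\widetilde{G}_0$ as $x$ via the Simplicity Theorem and Lemma~\ref{comparison}) and pairing it with the inductive identity $\widetilde{G}_0=G$; the $0\le t(G)$ case and the integer case are then purely formal unwindings of Definition~\ref{defoftemp} and the induction hypothesis.
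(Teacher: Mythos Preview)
Your overall strategy is sound and, in the negative-temperature case, more economical than the paper's: the paper, after observing that $\widetilde{G}_0=G$ is a number, passes to the canonical form $H$ of $k/2^m$ via Theorem~\ref{orderpreserving} and then runs a separate induction on $m$ to compute $t(H)=-1/2^m$ and the mast value explicitly. You instead try to read off $\widetilde{G}_0=x$ directly, which would short-circuit all of that.

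However, the justification you give for $\widetilde{G}_0=x$ does not go through. The proof of Proposition~\ref{Gtildenumber} shows, for $t>t(G)$, that $G^L_t-t\trel x\trel G^R_t+t$ for all options and then invokes the Simplicity Theorem; but the Simplicity Theorem only yields that $\widetilde{G}_t$ equals the \emph{simplest} number sandwiched by its options, not that it equals the particular number $x$. Nothing in that proof identifies $x$ as the simplest such number, so the sentence ``identifies this number as exactly $x$'' is unsupported. This is the one genuine gap.

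The repair is cheap and stays within your outline. The computation carried out in the proof of Theorem~\ref{existence}(\ref{stops}) in fact shows $L(\widetilde{G}_t)=\widetilde{\lambda}_t(G)$ and $R(\widetilde{G}_t)=\widetilde{\rho}_t(G)$ for \emph{every} $t$, not just $t\le t(G)$ (the restriction $t\le t(G)$ is only used there to identify $\widetilde{G}_t$ with $G_t$). Once you know $\widetilde{G}_0$ is a number, its left and right stops coincide with its value, so $\widetilde{G}_0=\widetilde{\lambda}_0(G)=\widetilde{\rho}_0(G)$. Now the monotonicity established in Theorem~\ref{existence} gives $\widetilde{\lambda}_0(G)\le\widetilde{\lambda}_{t(G)}(G)=x$ and $\widetilde{\rho}_0(G)\ge\widetilde{\rho}_{t(G)}(G)=x$, whence $\widetilde{G}_0=x$. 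Combined with your inductive identity $\widetilde{G}_0=G$, this closes the case $-1<t(G)<0$ exactly as you intended.
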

  \begin{proof}
  Induction on $\widetilde{b}(G)$. If $G$ is equal to an integer, the claim is easy. Suppose otherwise. Then,
  \begin{align*}
  \widetilde{G}_0&=\{G^L_0-0 \mid G^R_0+0\}\\
  &=\{G^L \mid G^R\}\ (\text{by induction})\\
  &=G.
  \end{align*}
  \quad Hence, if $t(G)\geq 0$, then $G_0=\widetilde{G}_0=G$.\\
  \quad If $t(G)< 0$, by Proposition \ref{Gtildenumber}, $\widetilde{G}_0$ is equal to a number, and therefore, so is $G$. Since $G$ is not equal to an integer, it can be written as 
  \begin{align*}
  G= \frac{k}{2^m}
  \end{align*} 
(where $k$ is odd, and $m\geq 1$). Let $H \in \form$ be the canonical form of $\frac{k}{2^m}$. Recall that $H$ can be written as follows:
 \begin{align*}
 H:=\left\{\frac{k-1}{2^m} \mid \frac{k+1}{2^m} \right\}
 \end{align*}
 (where $\frac{k-1}{2^m}$ and $\frac{k+1}{2^m}$ are again in their canonical forms). By Theorem \ref{orderpreserving} and $G=H$, it suffices to show that $H_0=\frac{k}{2^m}$.\\
  \quad We prove above by showing the following: For each $m \geq 1$ and an odd natural number $k$, the canonical form $H$ of $\frac{k}{2^m}$ satisfies $t(H)=-\frac{1}{2^m}$ and $H_{t(H)}$ is $\frac{k}{2^m}$-ish. \\
  \quad Induction on $m \geq 1$. Fix $m \geq 1$. We can assume that $(\frac{k\pm 1}{2^m})_t = \frac{k\pm 1}{2^m}$ with $t>-\frac{1}{2^{m-1}}$ (for $m =1$, this is obvious, and for $m>1$, this follows from IH). Then,
  \begin{align*}
  \widetilde{H}_{-\frac{1}{2^m}}&=\left\{\left(\frac{k-1}{2^m}\right)_{-\frac{1}{2^m}} +\frac{1}{2^m} \mid \left(\frac{k+1}{2^m}\right)_{-\frac{1}{2^m}} -\frac{1}{2^m}\right\}\\
  &=\left\{\frac{k-1}{2^m} +\frac{1}{2^m} \mid \frac{k+1}{2^m} -\frac{1}{2^m}\right\}\\
  &=\left\{\frac{k}{2^m} \mid \frac{k}{2^m}\right\}\\
  &=\frac{k}{2^m}+\{0 \mid 0\}\ (\text{by the Number Translation Theorem})\\
  &=\frac{k}{2^m}+\ast
  \end{align*}
  This is $\frac{k}{2^m}$-ish but not equal to a number. Therefore, by Theorem \ref{existence} (\ref{classify}) and Proposition \ref{Gtildenumber}, $t(H)=-\frac{1}{2^m}$, and $H_{t(H)}=\widetilde{H}_{t(H)}$ is $\frac{k}{2^m}$-ish.
  \end{proof}
 
  \begin{cor}\label{tempclass}
  For each $G \in \form$, the following holds:
   \begin{enumerate}
    \item $t(G)<0 \Longleftrightarrow G$ is equal to a number.
    \item $t(G)=0 \Longleftrightarrow G$ is numberish but not equal to any number.
    \item $t(G)>0 \Longleftrightarrow G$ is not numberish.
   \end{enumerate}
  \end{cor}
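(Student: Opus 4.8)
The plan is to deduce all three equivalences from Proposition~\ref{G_0=G}, which lets us replace $G$ by $G_0$, together with the facts already established about the numberish/number status of $G_t$ near $t=t(G)$. Adopting the convention $-\infty<0$, I first dispose of the case that $G$ is equal to an integer: then $t(G)=-\infty<0$ by Definition~\ref{defoftemp} and $G$ is a number, so the first equivalence holds, and the remaining two hold because both of their sides fail. Hence I may assume $G$ is not equal to an integer, so that $-\infty<t(G)\in\diadic$ and the value $t=0$ lies in the range $t>-1$ covered by Theorem~\ref{existence}(\ref{classify}) and by Proposition~\ref{tepid}.

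The core of the argument is then a trichotomy according to the position of $0$ relative to $t(G)$: (i) if $0>t(G)$ then $G_0$ is equal to a number, because by the ``$t>t(G)$'' clause of Definition~\ref{defoftemp} the game $G_0$ is by construction $x_{can}$, the canonical form of a dyadic rational; (ii) if $0=t(G)$ then $G_0=G_{t(G)}$ is numberish by Theorem~\ref{existence}(\ref{classify}) but not equal to a number by Proposition~\ref{tepid}; (iii) if $0<t(G)$ then $G_0$ is not numberish by the ``$t<t(G)$'' clause of Theorem~\ref{existence}(\ref{classify}). Since the three hypotheses are mutually exclusive and exhaustive, and so are the three conclusions (``equal to a number'' / ``numberish but not equal to a number'' / ``not numberish''), each of (i)--(iii) is in fact an equivalence (the converse of each follows since the other two hypotheses yield, via the other two implications, incompatible conclusions). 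Substituting $G=G_0$ from Proposition~\ref{G_0=G} and folding the integer case back in then yields precisely the three statements of the corollary.

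I expect no real obstacle here: the corollary is essentially a translation of Theorem~\ref{existence}, Proposition~\ref{tepid}, and Proposition~\ref{G_0=G} into the language of numbers. The only points to watch are the edge case of integers, where $t(G)=-\infty$ and the cited lemmas do not directly apply but everything is trivial; the reading of ``$t(G)<0$'' as including $t(G)=-\infty$; and the observation that the ``$0>t(G)$'' half of the dictionary comes straight from the definition of $G_t$ for $t>t(G)$, rather than from the statement about $\widetilde{G}_t$ in Proposition~\ref{Gtildenumber}.
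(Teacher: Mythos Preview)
Your proof is correct and follows essentially the same approach as the paper: establish the three forward implications via Theorem~\ref{existence}(\ref{classify}), Proposition~\ref{tepid}, and Proposition~\ref{G_0=G}, then obtain the converses from the mutual exclusivity and exhaustiveness of both trichotomies. Your write-up is simply more explicit about the integer edge case and about why the ``$t(G)<0$'' implication needs only the definition of $G_t$ rather than Proposition~\ref{Gtildenumber}.
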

  \begin{proof}
  Each right arrow is shown by Theorem \ref{existence}, Proposition \ref{tepid}, and \ref{G_0=G}. Hence, each left arrow can also be proved because the sign of $t(G)$ and the state of $G$ can take just one of three patterns stated above.
  \end{proof}
  
  \begin{rmk}
  In particular, if $G\in\diadic$ and $t\geq 0$, then $G_t=G_0=G$.
  \end{rmk}
  \begin{cor}
   $t(G+H) \leq \max \{t(G),t(H)\}$.
  \end{cor}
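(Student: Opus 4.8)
The plan is to deduce the bound from the homomorphism property (Theorem \ref{homomorphism}) together with the classification of $K_t$ as numberish or not in terms of the sign of $t-t(K)$ (Theorem \ref{existence}(\ref{classify}) and Definition \ref{defoftemp}). First I would dispose of the trivial case: if $G+H$ is equal to an integer then $t(G+H)=-\infty$ and there is nothing to prove. So assume henceforth that $G+H$ is not equal to an integer; then at least one of $G,H$ is not equal to an integer, so $t_0:=\max\{t(G),t(H)\}$ satisfies $t_0>-1$ by Theorem \ref{existence}(\ref{well-definedness}), and in particular $t_0\in\diadic$.

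The main step is to check that $(G+H)_t$ is equal to a number for every $t\in\diadic$ with $t>t_0$. Fix such a $t$, and let $K\in\{G,H\}$. Since $t>t_0\geq t(K)$: if $K$ is equal to an integer, then $K_t$ is that integer by Definition \ref{defoftemp}(1); and if $K$ is not equal to an integer, then $t>t(K)$, so $K_t=x_{\mathrm{can}}$ is the canonical form of the dyadic number $x=\lambda_{t(K)}(K)=\rho_{t(K)}(K)$ by Definition \ref{defoftemp}(2). In either case $K_t$ is equal to a number, so since $t>t_0>-1$, Theorem \ref{homomorphism} gives that $(G+H)_t=G_t+H_t$ is a sum of two numbers, hence equal to a number, hence numberish.

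Finally I would argue by contradiction. Suppose $t(G+H)>t_0$. Since $t_0$ and $t(G+H)$ both lie in $\diadic$ and $t_0<t(G+H)$, I can choose $t\in\diadic$ with $t_0<t<t(G+H)$; this $t$ satisfies $t>t_0>-1$. Applying Theorem \ref{existence}(\ref{classify}) to the non-integer game $G+H$ in the range $t<t(G+H)$ shows $(G+H)_t$ is \emph{not} numberish, contradicting the previous paragraph. Hence $t(G+H)\leq t_0=\max\{t(G),t(H)\}$. I do not anticipate any serious obstacle; the only things to be careful about are the bookkeeping around games equal to integers (so that the trivial case, the bound $t_0>-1$, and $t_0\in\diadic$ are all correctly justified) and invoking the density of $\diadic$ to produce a dyadic rational strictly between $t_0$ and $t(G+H)$ in the last step.
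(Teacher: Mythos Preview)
Your proof is correct and follows essentially the same idea as the paper: use Theorem~\ref{homomorphism} to see that $(G+H)_t=G_t+H_t$ is numberish once both factors are, and then read off $t(G+H)\leq\max\{t(G),t(H)\}$ from Theorem~\ref{existence}(\ref{classify}). The only minor difference is that the paper works directly at $t_0=\max\{t(G),t(H)\}$ (where $G_{t_0},H_{t_0}$ are already numberish), whereas you pass to a strictly larger $t$, upgrade ``numberish'' to ``number'', and finish by contradiction via a density argument; your version is a touch more elaborate but equally valid.
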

  \begin{proof}
  If both $G_t$ and $H_t$ are numberish, $(G+H)_t$ is also numberish by Theorem \ref{homomorphism}. Using this, the claim follows immediately.
  \end{proof}
  
  \begin{cor}
   Let $G,H \in \form$. Also, assume that $t, u \in \diadic$, $t>-1$, and $u \geq 0$. Then, 
   \begin{align*}
    (G_t)_u=G_{t+u}.
   \end{align*}
  \end{cor}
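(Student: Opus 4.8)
The plan is to prove the identity $(G_t)_u = G_{t+u}$ by induction on $\widetilde{b}(G)$, splitting into cases according to how $t$, $t+u$, $u$, $t(G)$, $t(G_t)$ compare. The base case, $G$ equal to an integer, is immediate since then $G_t = G$ and, by Corollary \ref{tempclass}, $t(G) < 0 \le u$ forces $(G_t)_u = G_t = G = G_{t+u}$ (and similarly $G_{t+u} = G$). So assume $G$ is not equal to an integer.

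First I would handle the case $t \ge t(G)$. Then $G_t$ is equal to a number: if $t > t(G)$ this is Theorem \ref{existence}\eqref{classify}, and if $t = t(G)$ then $G_t = G_{t(G)}$ is numberish by Theorem \ref{existence}, and by Proposition \ref{tepid} it is infinitesimally close to a number $x$; but in fact what we need is that cooling any numberish game by $u \ge 0$ yields the same result as cooling the number it is close to — more precisely, since $G_{t(G)}$ is numberish, $t(G_{t(G)}) \le 0$, so by Corollary \ref{tempclass} and the remark following it, $(G_{t(G)})_u = (G_{t(G)})_0 = G_{t(G)}$ for $u \ge 0$. Hmm — this needs care: $(G_{t(G)})_0 = G_{t(G)}$ by Proposition \ref{G_0=G}, and then $(G_{t(G)})_u = G_{t(G)}$ because $G_{t(G)}$ is numberish and cooling a numberish game by a nonnegative temperature does nothing (this is exactly the content of the remark after Corollary \ref{tempclass}, applied once we observe $(G_{t(G)})_u$ is again numberish). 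On the other side, $G_{t+u}$ with $t + u \ge t \ge t(G)$: if $t + u > t(G)$ it is the canonical form of $x := \lambda_{t(G)}(G)$, and if $t + u = t(G)$ (only possible when $u = 0$) we are in the $u=0$ case already covered. So in the subcase $t > t(G)$ both sides equal (the canonical form of) $x$, and in the subcase $t = t(G)$ both sides equal $G_{t(G)}$; either way the two sides agree.

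The main case is $-1 < t < t(G)$, so that $G_t = \widetilde{G}_t = \{\, G^L_t - t \mid G^R_t + t \,\}$ is a genuine formal game with $\widetilde{b}(G_t) < \widetilde{b}(G)$ — wait, that birthday inequality is not automatic, so instead I would argue directly on the definition of $(\,\cdot\,)_u$ applied to $G_t$, using the induction hypothesis on the proper subpositions $G^L, G^R$ of $G$ in the form $(G^L_t)_u = G^L_{t+u}$ and $(G^R_t)_u = G^R_{t+u}$, together with the fact that cooling commutes with the number-translations $\pm t$ via Theorem \ref{homomorphism} (since $\pm t \in \diadic$, $(X + c)_u = X_u + c_u = X_u + c$ for $c$ equal to a number and $u \ge 0$, using Proposition \ref{G_0=G}/Corollary \ref{tempclass}). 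One computes
\begin{align*}
\widetilde{(G_t)}_u &= \{\, ((G^L_t - t))_u - u, \ \ldots \mid \ldots \,\}\\
 &= \{\, (G^L_t)_u - t - u \mid (G^R_t)_u + t + u \,\}\\
 &= \{\, G^L_{t+u} - (t+u) \mid G^R_{t+u} + (t+u) \,\}\\
 &= \widetilde{G}_{t+u},
\end{align*}
where I have suppressed the symmetric left/right options and used the induction hypothesis at the third step. Now I must reconcile the ``freezing'' clauses: one checks that $t(G_t) = t(G) - t$, so that $u \le t(G_t)$ iff $t + u \le t(G)$, and in that regime $(G_t)_u = \widetilde{(G_t)}_u = \widetilde{G}_{t+u} = G_{t+u}$ directly; while for $u > t(G_t)$, i.e. $t + u > t(G)$, both $(G_t)_u$ and $G_{t+u}$ are by definition the canonical form of the common frozen value, and the equality $\lambda_{t(G_t)}(G_t) = \lambda_{t(G)}(G)$ follows from $\widetilde{(G_t)}_{t(G_t)} = \widetilde{G}_{t(G)}$ just established.

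The step I expect to be the genuine obstacle is establishing $t(G_t) = t(G) - t$ for $-1 < t < t(G)$. The inequality $t(G_t) \le t(G) - t$ should come from evaluating $\widetilde{\lambda}_{u}(G_t)$ and $\widetilde{\rho}_u(G_t)$ via the above option computation and observing they coincide at $u = t(G) - t$ because $\widetilde{\lambda}_{t(G)}(G) = \widetilde{\rho}_{t(G)}(G)$; the reverse inequality needs that they do \emph{not} coincide earlier, which I would extract from the minimality of $t(G)$ together with Theorem \ref{existence}\eqref{classify}(a) (that $G_s$ is not numberish for $t < s < t(G)$, hence $\widetilde{\lambda}_s(G) > \widetilde{\rho}_s(G)$, hence $\widetilde{\lambda}_{s-t}(G_t) > \widetilde{\rho}_{s-t}(G_t)$). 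A subtlety to watch: one must also confirm $t(G_t) > -1$ so that Definition \ref{defoftemp} applies to $G_t$ in the first place, but since $-1 < t < t(G)$ gives $t(G) - t > 0 > -1$, this is fine once the equality $t(G_t) = t(G) - t$ is in hand — and even before, $G_t$ is not equal to an integer (it is not even numberish, by Theorem \ref{existence}\eqref{classify}(a)), so Theorem \ref{existence}\eqref{well-definedness} already guarantees $t(G_t) > -1$.
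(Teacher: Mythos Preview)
Your approach is essentially the paper's: induct on $\widetilde{b}(G)$, dispose of the easy cases ($G$ an integer, or $t$ beyond $t(G)$), and in the main case compute $\widetilde{(G_t)}_u = \widetilde{G}_{t+u}$ from the induction hypothesis on $G^L,G^R$ together with Theorem~\ref{homomorphism}, then establish $t(G_t) = t(G) - t$ to reconcile the freezing clauses. The paper extracts $t(G) = t + t(G_t)$ slightly differently---it reads it off at the single value $u = t(G_t)$, where $(G_t)_u$ is numberish but not a number by Proposition~\ref{tepid}, forcing $\widetilde{G}_{t+u}$ to have the same status and hence $t+u = t(G)$---but your trajectory argument is equally valid once one notes (as you implicitly use) that $\widetilde{\lambda}_u(G_t) - \widetilde{\rho}_u(G_t)$ is non-increasing, so positivity on $(0,t(G)-t)$ propagates to $u\le 0$.

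There is one genuine slip. In the subcase $t = t(G)$ you invoke ``the remark following Corollary~\ref{tempclass}'' to get $(G_{t(G)})_u = G_{t(G)}$ for all $u \ge 0$. That remark applies only to \emph{numbers} $G\in\diadic$, whereas $G_{t(G)}$ is numberish but, by Proposition~\ref{tepid}, \emph{not} a number; indeed $t(G_{t(G)}) = 0$ by Corollary~\ref{tempclass}, so for $u > 0$ one has $(G_{t(G)})_u = x_{\mathrm{can}}$ (the frozen number), not $G_{t(G)}$ itself. Your conclusion ``the two sides agree'' survives, since $G_{t(G)+u}$ is also $x_{\mathrm{can}}$ for $u>0$ and both sides are $G_{t(G)}$ for $u=0$, but the stated reason is wrong. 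The cleanest repair is to split as the paper does, at $t > t(G)$ versus $t \le t(G)$, so that the boundary $t = t(G)$ is absorbed into the main computation; alternatively, argue the $t = t(G)$, $u>0$ case directly by observing that $\lambda_0(G_{t(G)}) = L(G_{t(G)}) = \lambda_{t(G)}(G)$, so both frozen values coincide.
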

  \begin{proof}
  Induction on $\widetilde{b}(G)$. If $G$ is equal to an integer, the claim is obvious. If $t > t(G)$, then $G_t$ and $G_{t+u}$ are equal to the same number, so the claim is shown easily. Now, suppose that $G$ is not equal to an integer, and $t \leq t(G)$. \\
  \quad If $u \leq t(G_t)$,
   \begin{align*}
    (G_t)_u&=\{ (G_t)^L_u - u \mid (G_t)^R_u + u \}\\
    &= \{ (G^L_t - t)_u - u \mid (G^R_t + t)_u + u \}\\
    &= \{ (G^L_t)_u - t_u - u \mid (G^R_t)_u + t_u + u \}\ (\text{by Theorem \ref{homomorphism}})\\
    &= \{ G^L_{t+u}-t-u \mid G^R_{t+u}+t+u \}\ (\text{by induction})\\
    &= \widetilde{G}_{t+u}
   \end{align*}
   Especially when $u = t(G_t)$, $(G_t)_u$ is numberish but not equal to any number, and so is $\widetilde{G}_{t+u}$. Therefore, we can get $t(G)=t+t(G_t)$, and $(G_t)_u=\widetilde{G}_{t+u}=G_{t+u}$ when $u \leq t(G_t)$.\\
   \quad In the remaining case $u > t(G_t)$, since $t(G)=t+t(G_t)$, both $(G_t)_u$ and $G_{t+u}$ are infinitesimally close to $(G_t)_{t(G_t)} = G_{t+t(G_t)}$. Because $(G_t)_u$ and $G_{t+u}$ are equal to numbers, the two games are equal to the same number.
  \end{proof}
  
  The next corollary is very famous; it gives us how to calculate the mean value of a given game.
  
  \begin{cor}
   Suppose $t>t(G)$. Then, $G_t=m(G)$ i.e. 
   \begin{align*}
   \lim_{n\rightarrow \infty}\frac{L(nG)}{n}=G_t=\lim_{n\rightarrow \infty}\frac{R(nG)}{n} \quad \mbox{in $\RR$}
   \end{align*} 
   (where $nG:= \sum_{i=1}^n G$).
   More precisely, There exists a constant (which can be dependent on $t,G$) $C \in \diadic$ such that for each $n \in \NN$, $|nG-nG_t| \leq C$ in $\games$ holds. 
  \end{cor}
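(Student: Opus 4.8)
The plan is to reduce the statement to a single boundedness lemma and then divide by $n$. Write $\mu:=G_t$. For every $s\in\diadic$ with $s>t(G)$, the game $G_s$ is equal to one and the same dyadic number by Theorem \ref{existence} (namely the common mast value of $\lambda_s(G)$ and $\rho_s(G)$, cf. Theorem \ref{existence}(\ref{mast value})); so $\mu=G_s$ for all such $s$, and in particular $\mu$ does not depend on the choice of $t$.

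The crux is the following \emph{Auxiliary Lemma}: if $s\in\NN$ and $Z\in\form$ satisfies $Z_s=0$ (as game values), then $-(s+1)\le Z\le s+1$. I would argue as follows. If $Z$ is equal to a number, then $Z=Z_s=0$ and we are done, so suppose $Z$ is not a number (hence not an integer). Since $Z_s$ is a number, Theorem \ref{existence}(\ref{classify}) together with Proposition \ref{tepid} forces $s>t(Z)$. The function $s\mapsto\widetilde{\lambda}_s(Z)$ is non-increasing, being $\max_{Z^L}\{\rho_s(Z^L)-s\}$, a maximum of functions of slope $-1$ or $0$ (Theorem \ref{existence}(\ref{increasing})); combined with $\widetilde{\lambda}_{t(Z)}(Z)=\lambda_{t(Z)}(Z)=\mu=Z_s=0$ this gives $\widetilde{\lambda}_s(Z)\le 0$, whence $\rho_s(Z^L)\le s$, i.e.\ $R((Z^L)_s)\le s$, for every left option $Z^L$ (Theorem \ref{existence}(\ref{stops})). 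If some left option satisfied $Z^L\ge s+1$, cooling by $s$ (Theorem \ref{orderpreserving}) would give $(Z^L)_s\ge(s+1)_s=s+1$ and hence $R((Z^L)_s)\ge s+1$, a contradiction; so $(s+1)-Z^L\not\le 0$ for every $Z^L$. Now $s+1$ is a positive integer and thus has no Right option, while $Z$ (not a number) has Left options, so in the game $(s+1)-Z$ with Right to move Right can only move to some $(s+1)-Z^L$, from which Left (to move) wins; therefore $(s+1)-Z\ge 0$, i.e.\ $Z\le s+1$. Applying this to $-Z$, which satisfies $(-Z)_s=-(Z_s)=0$ by Theorem \ref{homomorphism}, yields $Z\ge -(s+1)$.

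To conclude, fix a positive integer $s>t(G)$ and set $C:=s+1\in\diadic$. By Theorem \ref{homomorphism} and the Remark following Corollary \ref{tempclass}, $(G-\mu)_s=G_s-\mu_s=\mu-\mu=0$, and iterating Theorem \ref{homomorphism} gives $\bigl(n(G-\mu)\bigr)_s=n\,(G-\mu)_s=0$ for every $n\in\NN$. Applying the Auxiliary Lemma with $Z=n(G-\mu)=nG-n\mu$ yields $|nG-n\mu|\le C$ in $\games$; as $n\mu=nG_t=(nG)_t$ by Theorem \ref{homomorphism}, this is exactly the ``more precise'' assertion. Finally, applying the left stop to $n\mu-C\le nG\le n\mu+C$ (stops are order-preserving, fix numbers, and satisfy $L(A+x)=L(A)+x$ for a number $x$) gives $|L(nG)-n\mu|\le C$, so that $L(nG)/n\to\mu$; likewise $R(nG)/n\to\mu$. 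Since $\mu=G_t$, the corollary follows.

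I expect the Auxiliary Lemma to be the only genuine obstacle, and within it the passage from the one-sided information $R((Z^L)_s)\le s$ to the two-sided bound $-(s+1)\le Z\le s+1$: one should resist concluding $(Z^L)_s\le s$ (which is false in general) and note instead that it suffices to refute $Z^L\ge s+1$, which is visible from a single stop comparison after cooling. Keeping $s$ an integer, so that $s+1$ has no Right option, is what makes the final move analysis clean; the remainder is routine bookkeeping with results already established.
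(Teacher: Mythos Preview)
Your argument is correct. The reuse of the symbol $\mu$ inside the Auxiliary Lemma (where it suddenly denotes the mast value of $Z$ rather than of $G$) is a small notational slip, and the aside ``while $Z$ (not a number) has Left options'' is unnecessary---if $Z$ has no Left options then Right cannot move at all in $(s+1)-Z$ and loses outright---but these do not affect the logic.

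The paper proceeds differently and more directly through the thermograph. It observes that $(nG)_t=nG_t$ by the homomorphism theorem, so $L((nG)_t)=nG_t$; then, since $\lambda_s(nG)$ has slope $0$ or $-1$, one gets $nG_t=\lambda_t(nG)\ge \lambda_0(nG)-t=L(nG)-t$ (using Proposition~\ref{G_0=G}), and symmetrically $R(nG)\ge nG_t-t$. A basic stop property then converts these into the two-sided bound $|nG-nG_t|\le t+1$, with $C=t+1$. Your route replaces this geometric slope estimate by a self-contained boundedness lemma (``$Z_s=0$ implies $|Z|\le s+1$'') proved via order-preservation of cooling and an explicit second-player strategy against the positive integer $s+1$. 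The paper's version is shorter and ties the constant directly to the given $t$, while your Auxiliary Lemma is a reusable standalone statement; apart from this difference in packaging, both rest on the same ingredients (homomorphism, order-preservation, and the shape of $\lambda,\rho$) and yield essentially the same constant.
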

  \begin{rmk}
   The game $G_t$ above is sometimes denoted by $G_\infty$. Since it is a number, the statement above tells that there exists an asymptotic approximation of $G$ by a number $G_\infty$ (notice that it is easy to see that such an approximation should be unique).
  \end{rmk}
  \begin{proof}
  If $G$ is equal to a number, the claim is trivial. \\
  \quad Suppose otherwise. In this case, $t>t(G)\geq 0$. By Theorem \ref{homomorphism}, $(nG)_t=nG_t$, so 
  \begin{align*}
  L((nG)_t)=nG_t=R((nG)_t).
  \end{align*}
   Considering the slopes of $\widetilde{\lambda}_t(nG)$ and $\widetilde{\rho}_t(nG)$ (which are determined by Theorem \ref{existence}) and Proposition \ref{G_0=G}, 
  \begin{align*}
  L((nG)_t)&\geq L((nG)_0)-t = L(nG)-t > L(nG)-t-1, \\
  R((nG)_t)&\leq R((nG)_0)+t = R(nG)+t < R(nG)+t+1
  \end{align*}
   (note that we have $t>0$ by Corollary \ref{tempclass}). Therefore, we get $L(nG) < nG_t +t+1$ and $R(nG) > nG_t-t-1$. Because $nG_t\pm(t+1)$ is equal to a number, by the property of stops, $nG_t-t-1<nG<nG_t+t+1$. Therefore, the claim is shown by choosing $C=t+1$.
  \end{proof}
  
  Lastly, we will show the following inductive characterization of integers using thermographs (it is easy to extract a feasible algorithm for finding the thermograph of a given formal game $G \in \form$, simultaneously deciding whether each subposition of $G$ is equal to an integer or not).
  
  \begin{thm}\label{integer}
   Suppose $G \in \form$, and consider the following conditions for $G^L$'s and $G^R$'s (set $\max \emptyset:=-\infty$ and $\min \emptyset:=+\infty$):
   \begin{enumerate}
    \item\label{int1} $\max_{G^L}\{\rho_0(G^L)\} \in \ZZ$, and $G^L$ is equal to it.
    \item\label{slope1} $\max_{G^L}\{\rho_0(G^L)\} \in \ZZ$, and satisfies $\rho_0(G^L)=\max_{G^L}\{\rho_0(G^L)\}$ although $G^L$ is not equal to an integer. Furthermore, $\widetilde{\rho}_s(G^L)$ has slope $0$ on $s \in (-\delta,0]$, where $\delta$ is a sufficiently small positive number such that the slope is uniquely determined.
    \item\label{int2} $\min_{G^R}\{\lambda_0(G^r)\} \in \ZZ$, and $G^R$ is equal to it.
    \item\label{slope2} $\min_{G^R}\{\lambda_0(G^r)\} \in \ZZ$, and $G^R$ satisfies $\lambda_0(G^R)=\min_{G^R}\{\lambda_0(G^R)\}$ although $G^R$ is not equal to an integer. Furthermore, $\widetilde{\rho}_s(G^R)$ has slope $0$ on $s\in(-\delta,0]$, where $\delta$ is a sufficiently small positive number such that the slope is uniquely determined.
   \end{enumerate}
   
   Let
   \begin{align*}
    l&:= \begin{cases}
    \max_{G^L}\{\rho_0(G^L)\}+1 \quad &(\mbox{if there exists a $G^L$ which satisfies the condition (\ref{int1}) or (\ref{slope1})})\\
    \max_{G^L}\{\rho_0(G^L)\} \quad &(\mbox{otherwise})\\
    \end{cases}\\
    r&:= \begin{cases}
     \min_{G^R}\{\lambda_0(G^R)\}-1 \quad &(\mbox{if there exists a $G^R$ which satisfies the  condition (\ref{int2}) or (\ref{slope2})})\\
    \min_{G^R}\{\lambda_0(G^R)\} \quad &(\mbox{otherwise})\\
    \end{cases}
   \end{align*} 
   and 
   \begin{align*}
    S:=\{ n \in \ZZ \mid l \leq n \leq r\}
   \end{align*}
   Then, whether $G$ is equal to an integer can be decided as follows:
   \begin{enumerate}
    \item If $S=\emptyset$, then $G$ is not equal to an integer.
    \item Otherwise, $G$ is equal to $n \in S$ whose absolute value attains the minimum of 
    \begin{align*}
    \{|m| \mid m \in S \}
    \end{align*}
     (notice that such $n$ is unique since $S$ is an interval of $\ZZ$).
   \end{enumerate}
  \end{thm}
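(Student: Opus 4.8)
The plan is to split the theorem into two largely independent pieces: a characterization of integer-ness via the auxiliary functions $\overline{\lambda},\overline{\rho}$ of Lemma \ref{integerstop}, and a computation of $\overline{\lambda}(G),\overline{\rho}(G)$ from the $t=0$ section of the thermograph. For the first piece I would prove: \emph{$G$ is equal to an integer if and only if $[\overline{\lambda}(G),\overline{\rho}(G)]\cap\ZZ\neq\emptyset$, and in that case $G$ equals the element of $[\overline{\lambda}(G),\overline{\rho}(G)]\cap\ZZ$ of least absolute value.} The ``only if'' part is built into the definition of $\overline{\lambda},\overline{\rho}$ (both are set to $n$ once $G$ equals the integer $n$). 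For ``if'', if $G$ is not equal to an integer but $\overline{\lambda}(G)\le n\le\overline{\rho}(G)$ for some $n\in\ZZ$, then Lemma \ref{integerstop}(1),(2) give $G^L\trel n$ for all $G^L$ and $n\trel G^R$ for all $G^R$, so the Simplicity Theorem forces $G$ to equal an integer, a contradiction. Combining all four parts of Lemma \ref{integerstop} shows that the set of integers $n$ with $G^L\trel n$ and $n\trel G^R$ for every option is exactly $[\overline{\lambda}(G),\overline{\rho}(G)]\cap\ZZ$, so the Simplicity Theorem identifies $G$ with the simplest, i.e.\ least-absolute-value, element. (Games with no Left, resp.\ no Right, options are equal to integers and are treated directly; otherwise both option sets may be assumed non-empty.)

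For the second piece it suffices to show $\lceil l\rceil=\overline{\lambda}(G)$ and $\lfloor r\rfloor=\overline{\rho}(G)$, for then $S$ and $[\overline{\lambda}(G),\overline{\rho}(G)]\cap\ZZ$ have the same least-absolute-value element and are simultaneously empty. By Proposition \ref{G_0=G} and Theorem \ref{existence}(\ref{stops}), $\lambda_0(H)=L(H)$ and $\rho_0(H)=R(H)$ for all $H$. The heart of the matter is the following lemma, proved by simultaneous induction on $\widetilde{b}(H)$: for every $H\in\form$, $\overline{\lambda}(H)=\lceil\lambda_0(H)\rceil$, unless $H$ is not equal to an integer, $\lambda_0(H)\in\ZZ$, and $\widetilde{\lambda}_s(H)$ has slope $-1$ on $(-\delta,0]$ for $\delta$ small, in which case $\overline{\lambda}(H)=\lambda_0(H)+1$; dually $\overline{\rho}(H)=\lfloor\rho_0(H)\rfloor$, unless $H$ is not equal to an integer, $\rho_0(H)\in\ZZ$, and $\widetilde{\rho}_s(H)$ has slope $+1$ on $(-\delta,0]$, in which case $\overline{\rho}(H)=\rho_0(H)-1$. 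In the inductive step for the $\overline{\rho}$-clause one uses $\overline{\rho}(H)=\min_{H^R}\{\overline{\lambda}(H^R)-1\}$ and $\widetilde{\rho}_s(H)=\min_{H^R}\{\lambda_s(H^R)+s\}$: when $\rho_0(H)\in\ZZ$ the game $H$ is either an integer or not equal to a number, so $t(H)\ge 0$ and $\widetilde{\rho}_s(H)=\rho_s(H)$ near $0$, whence the value and slope of $\widetilde{\rho}_s(H)$ just left of $0$ determine which $H^R$ are active there and with what slopes; applying the inductive $\overline{\lambda}$-clause to those options gives the claim, the decisive point being that an $H^R$ with $\lambda_0(H^R)=\rho_0(H)$ whose $\widetilde{\lambda}_s(H^R)$ is flat near $0$ would make $\widetilde{\rho}_s(H)$ fall with slope $+1$ just left of $0$, contradicting the slope hypothesis. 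The case $\rho_0(H)\notin\ZZ$ and the integer base case are routine.

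This lemma is where the real work is, and is the main obstacle: one must control, through a single step of the $\overline{\lambda}/\overline{\rho}$ recursion, how the slope of a $\min$ (or $\max$) of diadic trajectories just to the left of $t=0$ creates or suppresses a ``$\pm1$'' correction, and in particular exclude the degenerate configurations in which a flat thermograph boundary of an option sabotages the slope hypothesis on the parent.

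To finish, apply the lemma to the options of $G$. Using $\rho_0(G^L)=R(G^L)$, for each $G^L$ the number $\overline{\rho}(G^L)+1$ equals $\rho_0(G^L)+1$ exactly when $\rho_0(G^L)\in\ZZ$ and either $G^L$ is an integer or $\widetilde{\rho}_s(G^L)$ is flat near $0$, and is at most $\lceil\rho_0(G^L)\rceil\le\lceil\max_{G^L}\rho_0(G^L)\rceil$ otherwise. Taking the maximum over $G^L$ yields $\overline{\lambda}(G)=\max_{G^L}\{\overline{\rho}(G^L)+1\}=\lceil l\rceil$, the two cases in the definition of $l$ matching exactly whether some $G^L$ achieves $\max_{G^L}\rho_0(G^L)$ while meeting condition (\ref{int1}) or (\ref{slope1}); symmetrically $\overline{\rho}(G)=\lfloor r\rfloor$. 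The theorem now follows from the first piece.
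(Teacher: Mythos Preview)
Your proof is correct, but it takes a genuinely different route from the paper's. The paper proves the theorem by showing directly that $S$ coincides with the set
\[
I:=\{n\in\ZZ\mid \forall G^L.\ G^L\trel n\ \&\ \forall G^R.\ n\trel G^R\},
\]
and then invokes the Simplicity Theorem. Both inclusions are argued option by option using Lemma~\ref{comparison}: for instance, to show $S\subset I$, if $n\in S$ and some $G^L$ has $R(G^L)=n$, then the failure of conditions~(\ref{int1}) and~(\ref{slope1}) forces $\widetilde{\rho}_s(G^L)$ to be strictly increasing on $(-\delta,0]$, so Lemma~\ref{comparison} produces a $G^{LR}\le n$ and hence $G^L\trel n$. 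The quantities $\overline{\lambda},\overline{\rho}$ never enter.

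Your approach instead factors through $\overline{\lambda},\overline{\rho}$: you first identify $I$ with $[\,\max_{G^L}\{\overline{\rho}(G^L)+1\},\,\min_{G^R}\{\overline{\lambda}(G^R)-1\}\,]\cap\ZZ$ via Lemma~\ref{integerstop}, and then prove a new structural lemma computing $\overline{\lambda}(H),\overline{\rho}(H)$ from $\lambda_0(H),\rho_0(H)$ and the slope of $\widetilde{\lambda}_s(H),\widetilde{\rho}_s(H)$ at $0^-$. That lemma is an interesting statement in its own right and makes explicit the link between the ``heating'' integers $\overline{\lambda},\overline{\rho}$ and the thermograph, but it costs you a full simultaneous induction that the paper avoids entirely. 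A small presentational point: your identity ``$\lceil l\rceil=\overline{\lambda}(G)$'' is literally correct only when $G$ is not an integer; what your argument actually establishes (and what suffices) is $\lceil l\rceil=\max_{G^L}\{\overline{\rho}(G^L)+1\}$, which matches $I$ regardless of whether $G$ is an integer. In short, both proofs are valid; the paper's is shorter because Lemma~\ref{comparison} already packages the slope-to-comparison step, while yours yields a reusable description of $\overline{\lambda},\overline{\rho}$ in thermographic terms.
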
 
  
  \begin{proof}
   By the Simplicity Theorem, it suffices to show that $S$ is equal to the set $I$ defined as follows:
   \begin{align*}
    I:=\{n \in \ZZ \mid \forall G^L.\ G^L \trel n \ \& \ \forall G^R.\ n \trel G^R\}
   \end{align*}
   Let us show $S \subset I$ first. Fix $n \in S$. By symmetry, we will only show $\forall G^L.  G^L\trel n$. If there is no $G^L$, the claim is trivial, so suppose otherwise. Fix a $G^L$. By the definition of $l$, $R(G^L)=\rho_0(G^L) \leq n$ holds. If $R(G^L)$ is in particular strictly less than $n$, then the claim follows from the basic property of stops. So, we will focus on the case $R(G^L)=n$. Because $R(G^L) \leq l \leq n$, $R(G^L)=l$. Together with the definition of $l$, it follows that $G^L$ does not satisfy the condition (\ref{int1})(\ref{slope1}). Therefore, $G^L$ is not equal to any integer, and $\widetilde{\rho}_s(G^L)$ is strictly increasing for $s \in (-\delta, 0]$. Hence, Lemma \ref{comparison} can be applied, and it follows that there exists a $G^{LR}$ such that $n=R(G^L)=\widetilde{\rho}_0(G^L) \geq G^{LR}_0+0=G^{LR}$. Thus, we get $n \trer G^L$. This completes the proof of $S \subset I$.\\
   \quad Now, we will show $I \subset S$. Fix $n \in I$. By symmetry, we will focus on showing $l \leq n$. \\
   \quad If there exists a $G^L$ which satisfies condition (\ref{int1}), then 
   \begin{align*}
   l=\max_{G^L}\{\rho_0(G^L)\}+1=G^L+1.
   \end{align*}
    Besides, $n \in I$ gives $G^L \trel n$, so $G^L+1 \leq n$ (since the both sides are equal to integers), which implies $l \leq n$.\\ 
   \quad If there exists a $G^L$ which satisfies condition (\ref{slope1}), then 
   \begin{align*}
   l=\max_{G^L}\{\rho_0(G^L)\}+1=\rho_0(G^L)+1.
   \end{align*} 
   Also, $n \in I$ gives $G^L \trel n$, which implies $\rho_0(G^L)=R(G^L) \leq n$. Hence, in order to show $l\leq n$, it suffices to show $R(G^L)<n$ (note that $R(G^L)$ is equal to an integer). Suppose otherwise. Then, because the condition (\ref{slope1}), Lemma \ref{comparison} can be applied, and it follows that 
   \begin{align*}
   R(G^L)=\widetilde{\rho}_0(G^L) \trel G^{LR}_0+0=G^{LR}
   \end{align*}
    for any $G^{LR}$ (the first equality can be obtained as follows: $R(G^L)=n \in \ZZ$ and the fact that $G^L$ is not equal to an integer imply that $G^L$ is not equal to a number. Therefore, $0 \leq t(G)$ and $R(G^L)=\rho_0(G^L)=\widetilde{\rho}_0(G^L)$). This means $n=R(G^L) \leq G^L$ (by the Integer Avoidance Theorem), which is a contradiction.\\
   \quad If there is no $G^L$ satisfying neither of condition (\ref{int1})(\ref{slope1}), then $l=\max_{G^L}\{\rho_0(G^L)\}$. Because $n \in I$ gives $\rho_0(G^L) \leq n$ as above, we obtain $l \leq n$. This completes the proof of $I \subset S$.         
  \end{proof}

\end{document}